 \newtheorem{theorem}{Theorem}[section]
 \newtheorem{lemma}[theorem]{Lemma}
 \newtheorem{corol}[theorem]{Corollary}
 \newtheorem{prop}[theorem]{Proposition}
 \newtheorem{example}[theorem]{Example}
 \newtheorem{condition}[theorem]{Condition}
 \def\bitemize{\begin{itemize}}\def\eitemize{\end{itemize}}
 \def\beqlb{\begin{eqnarray}}\def\eeqlb{\end{eqnarray}}
 \def\beqnn{\begin{eqnarray*}}\def\eeqnn{\end{eqnarray*}}
 \def\qed{\hfill$\Box$\medskip}
\begin{document}
\selectlanguage{english}

\

\bigskip\bigskip\bigskip

\centerline{\LARGE\bf Toroidal dimer model and Temperley's bijection}

\bigskip

\centerline{Wangru Sun}

\bigskip

\medskip

{\narrower

\noindent{\bf Abstract.} Temperley's bijection relates the toroidal dimer model to cycle rooted spanning forests ($CRSF$) on the torus. The height function of the dimer model and the homology class of $CRSF$ are naturally related. When the size of the torus tends to infinity, we show that the measure on $CRSF$ arising from the dimer model converges to a measure on (disconnected) spanning forests or spanning trees. There is a phase transition, which is determined by the average height change.

\smallskip

\smallskip




\bigskip

\section{Introduction}

The dimer model, also called the perfect matching model, was first introduced in physics and chemistry to model the adsorption of di-atomic molecules on the surface of a crystal \cite{FR37}. In the 1960's Kasteleyn (\cite{Kas61}, \cite{Kas67}), Fisher and Temperley (\cite{TF61}) have shown how to calculate the partition function. Many progresses have been made since the late 1990s, for example, \cite{Ken97}, \cite{CKP01}, \cite{KO2}, etc. In \cite{KOS06}, the authors reveal the existence of a phase diagram for the dimer model on infinite bipartite bi-periodic graphs.

A spanning tree of a graph is a connected, contractible union of edges where every vertex is covered. Pemantle (\cite{Pem}) proves that the uniform spanning tree measures on finite subgraphs of $\mathbb{Z}^d$ converge weakly, as the subgraphs tend to the whole of $\mathbb{Z}^d$. When $d\leq4$, the limiting measure is supported on a spanning tree of $\mathbb{Z}^d$, otherwise there are almost surely infinitely many trees. Burton and Pemantle (\cite{BP}) prove a transfer current theorem.

Temperley (\cite{Tem74}) first introduced a bijection on the square grid between spanning trees and dimer configurations. It was generalized by Burton and Pemantle in \cite{BP} to unweighted planar graphs. Kenyon, Propp and Wilson (\cite{KPW}) generalized this construction to directed weighted planar graphs by providing a measure preserving bijection between oriented weighted spanning trees of the planar graph and dimer configurations of its double graph, see Section 2 for definitions.

Let $G$ be a bi-periodic planar graph, and for $N\in\mathbb{N}^*$, consider the toroidal graph $\mathcal{G}_N=G/(N\mathbb{Z})^2$. Then, Temperley's bijection relates dimer configurations of its double graph to \textit{Cycle Rooted Spanning Forests} ($CRSF$) of $\mathcal{G}_N$. On $CRSF$ there is a natural probability measure arising from that of the dimer model.

A dimer configuration gives a height function. If the graph is toroidal, the height function is additively multivalued. Dub\'edat and Gheissari (\cite{Dub}) show that, under Temperley's bijection, the height function of the dimer model and the homology class of the $CRSF$ are naturally related. Proposition 2.1 of this paper gives an independent proof, which relies on geometric considerations.

When $N\rightarrow\infty$, such measures on $CRSF$ converge to a limiting Gibbs measure~$\mu$.

A natural question is the topology of the support of the limiting measure. In this paper we give a characterization of the number of connected components. It can loosely be stated as follows. A precise statement is given in Theorem 4.2 and Theorem 5.3.

\begin{theorem}
When the slope of the limiting dimer measure is non-zero, then under $\mu$, there are a.s. infinitely many connected components.
\end{theorem}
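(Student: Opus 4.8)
\medskip

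\noindent\textit{Proof proposal.} The plan is to use Proposition 2.1 to convert the non-vanishing of the slope into a lower bound of order $N$ on the number of non-contractible (``winding'') cycles of the $CRSF$ of $\mathcal G_N$; these cycles form a lamination of the torus --- pairwise disjoint, non-crossing, mutually homologous --- and upon passing to the local limit the lamination survives, producing infinitely many pairwise disjoint bi-infinite paths in the limiting forest $\mathfrak F$, which necessarily lie in distinct connected components.

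\smallskip

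\noindent\emph{Step 1: from the slope to the homology of the $CRSF$.} Write $\mathfrak F_N$ for the $CRSF$ of $\mathcal G_N$ under the dimer measure and $[\mathfrak F_N]\in H_1(\mathbb T^2;\mathbb Z)\cong\mathbb Z^2$ for its homology class. By Proposition 2.1 this class is, up to a fixed linear isomorphism, the pair of height changes of the dimer configuration around the two generators of the torus; since the slope $(s_1,s_2)$ is the expected height change per fundamental domain and the toroidal measures are translation invariant on $\mathcal G_N$, one gets $\mathbb E[[\mathfrak F_N]]=N(s_1,s_2)+o(N)$, so $\mathbb E\,\|[\mathfrak F_N]\|\ge cN$ when $(s_1,s_2)\neq 0$. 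A winding cycle of class $(p,q)$ has length at least $c'N\,\|(p,q)\|$ (lift it to the periodic plane $G$ and use that graph and Euclidean distance on $G$ are comparable), and winding cycles are vertex-disjoint in a graph with $O(N^2)$ edges, giving the deterministic bound $\|[\mathfrak F_N]\|\le CN$. Hence $\mathbb E\,\|[\mathfrak F_N]\|$ and $\mathbb E\,\|[\mathfrak F_N]\|^2$ are of orders $N$ and $N^2$, and by Paley--Zygmund there is $p_0>0$ with $\mathbb P(\|[\mathfrak F_N]\|\ge cN)\ge p_0$ for all large $N$.

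\smallskip

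\noindent\emph{Step 2: many winding cycles.} In a $CRSF$ every component is unicyclic and the cycles are vertex-disjoint, hence non-crossing on $\mathbb T^2$; two disjoint essential loops on the torus are homologous up to sign, so there is a primitive $v=v(\mathfrak F_N)$ with every winding cycle of class $\pm v$, whence $[\mathfrak F_N]=m\,v$ with $|m|\le W$, $W$ being the number of winding cycles. On $\{\|[\mathfrak F_N]\|\ge cN\}$ this gives $W\ge cN/\|v\|$, and it remains to exclude that a bounded number of very long cycles carries all the homology: one must show that with probability bounded below $\|v(\mathfrak F_N)\|=O(1)$, equivalently that the longest cycle of $\mathfrak F_N$ has length $O(N)$, equivalently $W\ge cN$. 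This is where the fine behaviour of the dimer model at the given slope enters, and I expect it to be the main obstacle: in a frozen phase the configuration is deterministic and periodic and the claim is immediate; in a gaseous phase the height function has uniformly bounded fluctuations about its tilted mean, forcing $\Theta(N)$ near-straight near-parallel winding loops; in a liquid phase one invokes the convergence of the recentred height function to a Gaussian free field together with the Temperley correspondence between winding cycles and the macroscopic level lines of the field to control their number and length. (That the theorem is split into Theorem~4.2 and Theorem~5.3 suggests precisely such a case analysis.)

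\smallskip

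\noindent\emph{Step 3: passage to the limit.} By Steps 1--2, with probability at least $p_0>0$ the $CRSF$ $\mathfrak F_N$ contains $\ge cN$ pairwise disjoint, non-crossing, mutually homologous winding cycles; cyclically ordered around the torus, they occupy a positive fraction of the edges, and a transversal segment of fixed length $L\ll N$ placed at a generic position is crossed by $\ge c''L$ of them at distinct vertices, hence on pairwise distinct components. Take now the weak local limit $\mathcal G_N\to G$. First, ``$e_0$ lies on a winding cycle'' implies, for every fixed $L$ and all large $N$, that the component of $e_0$ in $\mathfrak F_N$ meets $\partial B_L(e_0)$ along a simple path through $e_0$ --- a local event --- so $\mu(e_0\in\text{a bi-infinite simple path of }\mathfrak F)\ge\delta>0$, i.e.\ the infinite part of $\mathfrak F$ has positive edge-density. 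Second --- the delicate point --- the lamination property (winding cycles pairwise disjoint, non-crossing, on distinct components) stems from $\mathfrak F_N$ being a union of unicyclic components with non-crossing cycles, a feature that the geometric considerations behind Proposition 2.1 show to persist in the limit; so $\mu$-a.s.\ the infinite leaves of $\mathfrak F$ are pairwise disjoint bi-infinite paths. A family of pairwise disjoint bi-infinite paths of positive edge-density must be infinite, since finitely many would cover only $O(L)$ edges of $B_L$; thus on a positive-probability event $\mathfrak F$ has infinitely many components. As this is a translation-invariant event and $\mu$ is ergodic (ergodicity being inherited from the limiting dimer Gibbs measure through Temperley's bijection), it holds $\mu$-almost surely.
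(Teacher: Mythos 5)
Your overall strategy---convert the non-zero slope into the presence of many homologically non-trivial cycles, observe that distinct cycles of a $CRSF$ lie in distinct components, and pass to the limit using ergodicity---is in the right spirit, but the argument does not close, and the gap is exactly where you flag it. The slope only controls the \emph{total} homology class $[\mathfrak F_N]=m\,v$ with $v=(p,q)$ primitive; since a single root-cycle may have a primitive class such as $(1,q)$ with $q$ of order $N$, a configuration with $O(1)$ cycles can still carry homology of norm $\Theta(N)$, so $\mathbb{E}\,\|[\mathfrak F_N]\|\geq cN$ does not by itself give a growing number of winding cycles. Your Step 2 acknowledges this and replaces the missing estimate by a phase-by-phase sketch (deterministic configuration in the frozen case, bounded height fluctuations in the gaseous case, GFF level lines in the liquid case); none of these is carried out, and the liquid-phase step in particular would require substantial work that the theorem is supposed to deliver, not presuppose. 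Step 3 also leans on an unproved persistence claim (``the lamination property persists in the limit''); to make disconnection survive the local limit one must phrase it as ``not connected within $B_L$'' uniformly in $N$, which is where the paper's Lemma 4.1 enters.

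The paper avoids the global cycle count entirely by a local first-moment argument. Fix $v_1$ and $v_2=v_1+k\hat y$ on a periodic vertical path $\gamma_y$. By Proposition 2.1 the \emph{signed} number of root-cycle crossings of the segment of $\gamma_y$ between $v_1$ and $v_2$ has expectation converging to $kt$, while it is deterministically bounded in absolute value by $2kM-1$, the number of lattice points on that segment. A one-line optimization then shows that for $k$ large the probability that this signed count lies in $\{-1,0,1\}$ is bounded away from $1$, uniformly in $N$ and in the ball $B_L$; and by Lemma 4.1 a signed count of absolute value at least $2$ forces a separating \emph{dual} cycle between $v_1$ and $v_2$, hence disconnection within $B_L$. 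This gives disconnection of two prescribed vertices with probability bounded below in the limit, and ergodicity upgrades ``at most $j$ components with probability $<1$'' to ``at most $j$ components with probability $0$'' for every $j$. Note that this argument is indifferent to whether the homology is carried by many short cycles or by a few long ones---a long cycle of class $(1,q)$ simply crosses the fixed segment many times---which is precisely the dichotomy your Step 2 cannot resolve. If you want to salvage your outline, replace the global count $W$ by the signed crossing number of a fixed finite transversal and you recover the paper's proof.
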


\begin{theorem}
Let $G$ be a graph verifying the condition ($\star$). When the slope of the limiting dimer measure is zero, then under $\mu$, there is a.s. one connected spanning tree.
\end{theorem}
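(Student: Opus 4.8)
The plan is to deduce the statement from two facts about the limiting measure $\mu$: (A) that $\mu$ is supported on spanning forests of $G$, i.e.\ the cycles of the CRSF disappear in the limit, and (B) that this forest is $\mu$-almost surely connected. Fact (A) is where the zero-slope hypothesis is used decisively, and fact (B) is where condition ($\star$) enters; together they give a single connected spanning tree, which is Theorem~5.3.

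For (A), first note that on $\mathcal{G}_N$ every connected component of a CRSF contains exactly one cycle, and that these cycles are vertex-disjoint simple closed curves on the torus, hence mutually parallel: their homology classes are integer multiples of one primitive class. By Proposition~2.1 the total homology class of the CRSF equals, up to a fixed linear map, the height change of the associated dimer configuration; since the limiting dimer measure has zero slope this height change is $o(N)$ (indeed tight), so in the limit the CRSF carries trivial homology and no non-contractible cycle survives. It then suffices to prove that a fixed edge $e_0$ lies on a cycle of the CRSF with probability tending to $0$ as $N\to\infty$: approximating cycles by ones of bounded length and using translation invariance, this forces the density of ``cycle edges'' under $\mu$ to be zero, hence $\mu$ to have no cycle almost surely. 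Through Temperley's bijection the event ``$e_0$ lies on a cycle'' becomes a dimer event on the double graph, and I would estimate its probability from the Kasteleyn/determinantal representation of the finite-volume measure together with the polynomial decay of correlations of the zero-slope liquid Gibbs measure: a cycle through $e_0$ forces a long alternating path, whose statistical weight decays fast enough to beat the combinatorial entropy. This quantitative estimate — that the zero-slope dimer measure pushes forward to an \emph{honest} forest rather than a positive density of small loops or a macroscopic loop — is the main obstacle; the topological bookkeeping above is soft by comparison, and a non-zero slope would instead produce the macroscopic family of non-contractible cycles behind Theorem~4.2.

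For (B), once (A) is known $\mu$ is a translation-invariant Gibbs measure for the weighted spanning-forest model on $G$, with conductances read off from the dimer weights via Temperley's bijection, and it is sandwiched between the wired and free uniform spanning forests of $G$. Since $G$ is a bi-periodic planar graph it is quasi-isometric to $\mathbb{R}^2$, so simple random walk on $G$ with these periodic conductances — which condition ($\star$) keeps admissible and in the liquid regime at slope zero — is recurrent. For a recurrent network the wired and free uniform spanning forests coincide and are almost surely a single spanning tree: this follows from Wilson's algorithm rooted at infinity (the loop-erased walks issued from any two vertices merge almost surely) or, equivalently, from the Benjamini--Lyons--Peres--Schramm criterion that two independent random walks intersect almost surely. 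Hence $\mu$ is this single tree. Alternatively one may argue connectedness directly at finite $N$: the number of basins of the functional graph stays tight once (A) excludes small loops, the inter-basin interfaces carry vanishing density, and so a fixed pair $u,v$ lies in distinct components with probability $\to 0$; passing to the limit gives $\mathbb{P}_\mu[u\leftrightarrow v]=1$ for all $u,v$.

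Combining (A) and (B), $\mu$ is almost surely a single connected spanning tree of $G$. The genuinely delicate input is the vanishing of the cycle probabilities in the zero-slope regime of (A); the remainder is topology plus standard uniform-spanning-forest theory on recurrent graphs.
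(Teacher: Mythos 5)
Your decomposition into (A) ``no cycles survive'' and (B) ``the forest is connected'' is reasonable in outline, but both halves contain genuine gaps, and the paper's actual route avoids exactly the step you flag as the main obstacle. For (A), the quantitative estimate that a fixed edge lies on a cycle with vanishing probability is not supplied; you acknowledge it is the hard part, and it is. The paper never proves such an estimate. Instead it uses Lemma~5.1 to reduce zero slope to zero magnetic field, and then approaches $\mu$ by a \emph{second} sequence of measures $m_N$: wired spanning-tree measures on planar exhaustions $G_N$, which are supported on trees for every $N$ by construction, so no cycle estimate is ever needed. The identification $m=\mu$ is where condition ($\star$) actually enters: it guarantees that the Green's-function differences in (5.14) converge and decay at infinity, so the limiting kernel $A=BD^{-1}$ satisfies $A\Delta=d$ and lies in $\mathcal{C}_0^B(\mathbb{Z}^2)$, and the uniqueness statement of Theorem~3.2 forces it to coincide with the kernel $(K^{-1})^V$ of $\mu$. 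Your reading of ($\star$) as keeping the walk ``admissible and in the liquid regime'' is not what the condition says; note also that the zero-slope point is generically a \emph{gaseous} point of the phase diagram (see Section~6), not a liquid one, so the polynomial-decay input you invoke is not available in general.

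For (B), the recurrence claim is simply false in the setting of the theorem. The graphs here are directed and weighted; for the drifted square grid with $a\neq c$ or $b\neq d$ the associated walk has a nonzero linear drift and is transient, even though the graph is quasi-isometric to $\mathbb{R}^2$. The sandwiching of $\mu$ between wired and free uniform spanning forests, and the statement ``recurrent $\Rightarrow$ WSF $=$ FSF $=$ one tree,'' are facts about reversible networks and do not transfer to directed conductances. The paper's Lemma~5.7 argues connectivity differently: in the scaling regime the walk is a drift of order $n$ plus fluctuations of order $\sqrt{n}$, so two independent walks intersect infinitely often a.s.; Theorem~1.1 of Lyons--Peres--Schramm then upgrades this to infinitely many intersections of a loop-erased walk with an independent walk, which by Pemantle's criterion is exactly what makes the limit of the wired spanning trees a single tree. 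So while your overall target (forest, then connectivity) matches the theorem, neither of your two key inputs --- the cycle-killing estimate and the recurrence --- is correct or available, and the argument as written does not close.
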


For the definition of the condition ($\star$), see Section 5. Especially, this condition is verified by the drifted square grid graph, see Example 5.6 for definition. This name is inherited form \cite{Chhita}.

Combining Theorems 1.1 and 1.2 gives a full picture of the phase diagram. When the slope of the limiting dimer measure is not zero, there are a.s. infinitely many trees, and when the slope is zero, there is a.s. only one spanning tree. Zero magnetic field lies in the connected phase. In the case of the drifted square grid graph, this can be pictured as in Figure 1. A more detailed statement is given in Section 6.

\begin{figure}[H]
\centering
\includegraphics[width=0.5\textwidth]{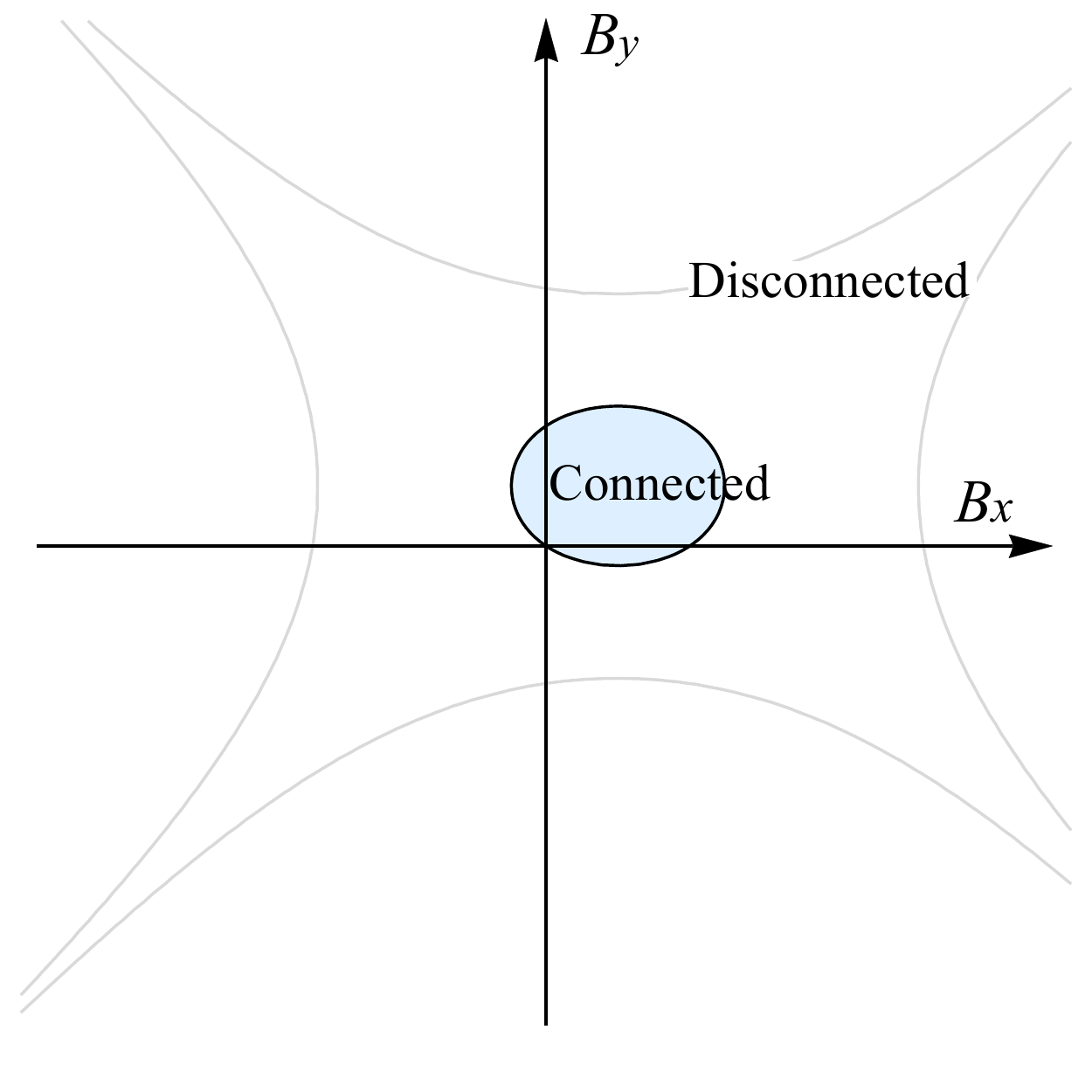}
\caption{Phase diagram of a typical weighting.}
\end{figure}

\textbf{Acknowledgements.} I would like to thank C\'edric Boutillier and B\'eatrice de Tili\`ere for their directions, comments and references. Also it's my pleasure to thank Richard Kenyon for his insightful comments.

\section{Definitions and Facts}
\subsection{Basic structures}

Let $G=(V,E)$ be a planar connected graph, where $V$ is the set of vertices and $E$ is the set of edges. We take $G^*=(V^*, E^*)$ as its \textit{dual}, whose vertices correspond to faces of $G$ and two vertices are joined by an edge in $E^*$ if and only if these two faces are neighboring in $G$ ($G$ is also called the \textit{primal}). If we take the union of $G$ and $G^*$, color $V$ and $V^*$ in black (in the figures we use grey diamonds to represent vertices in $V^*$), take the intersections of the edges as vertices and color them in white, the new graph we obtain is denoted by $G^d$ and called the \textit{double graph} of $G$. Every black vertex of $G^d$ has only white neighbors and vice-versa. Such property is called \textit{bipartite}.

\begin{figure}[H]
\centering
\includegraphics[width=0.6\textwidth]{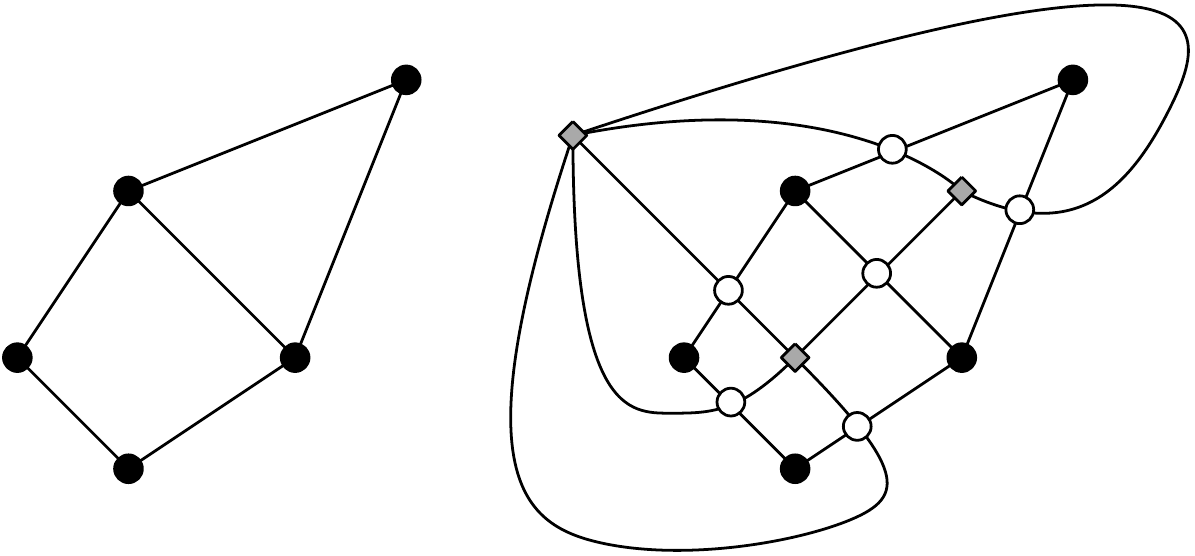}
\caption{Graph $G$ and its double $G^d$.}
\end{figure}

If $G$ is an infinite $\mathbb{Z}^2$-periodic graph, then its quotient graph of size $N$ is the toroidal graph $\mathcal{G}_N=G/(N\mathbb{Z})^2$. We use $\mathcal{G}^d_N$ and $\mathcal{G}^*_N$ to denote the quotient graphs of $G^d$ and $G^*$. Note that, in the notation for graphs, calligraphic letters (like $\mathcal{G}$) symbolize toroidal graphs, and normal letters (like $G$) symbolize planar ones or both of them (when we talk about something for both planar and toroidal graphs).

We say that a graph $G$ is \textit{weighted} and \textit{directed} if every directed edge $uv$ of $G$ is assigned a non-negative weight, which in general is different from that of $vu$. A weight function $c$ is a non-negative function defined on directed edges of $G$. By saying that a graph is \textit{unweighted}, we mean that all edges have weight $1$. For $G^*$ arising from $G$, by default we set $G^*$ to be unweighted.

We say that $G^d$ is \textit{weighted} if every (non-directed) edge of $G^d$ is assigned a non-negative weight. We denote this weight function by $c$ again, and in general there is no ambiguity when we use the same letter $c$ to denote weight functions defined on different objects.

There exits a bijection between the weight functions on $G$ and $G^*$ (as weighted and directed graphs) and the weight functions on $G^d$ (as a weighted graph). For every edge $uv$ of $G$, on $G^d$ let $w$ be the white vertex between $u$ and $v$ (as in figure 3). Given a weight function $c$ on $G$ and $G^*$, we let $c(uw)$ be $c(uv)$ and let $c(vw)$ be $c(vu)$. In the same way we assign a weight for every edge arising from $G^*$. This bijection is to be used in the setting of Temperley's bijection, see Section 2.2.

\begin{figure}[H]
\centering
\subfigure[Weights of $G$.]{
\includegraphics[width=0.25\textwidth]{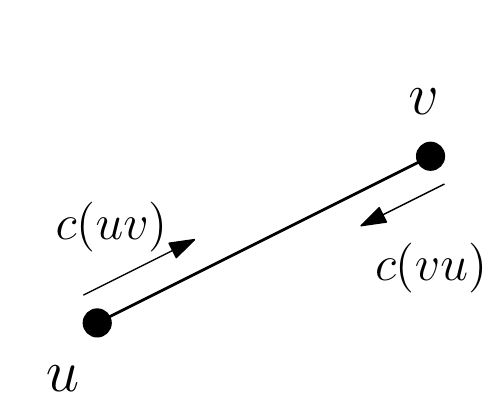}}
\subfigure[Weights of $G^d$.]{
\includegraphics[width=0.25\textwidth]{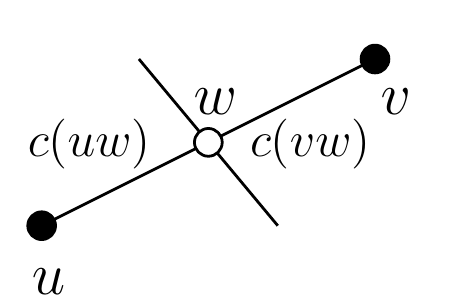}}
\caption{Weights.}
\end{figure}

An \textit{oriented spanning tree} ($OST$) of a graph $G$ is a connected, contractible union of directed edges such that every vertex of $G$ except one has exactly one outgoing edge. The only vertex having no outgoing edge is called the root of the tree. The weight of the tree is the product of the weights of the edges.

An \textit{oriented cycle-rooted spanning forest} ($OCRSF$) of a toroidal graph $\mathcal{G}$ is a union of directed edges such that every vertex of $\mathcal{G}$ has exactly one outgoing edge, and edges don't form contractible cycles. Each connected component of an $OCRSF$ is called \textit{oriented cycle-rooted tree} ($OCRT$), which contains exactly one non-trivial oriented cycle, and every edge other than those on the cycle is oriented towards the cycle. This cycle is called the root-cycle of the $OCRT$. For each configuration, the root-cycles are all parallel (in the sense of homotopy).

A \textit{dimer configuration} of a bipartite graph is a subset of edges such that every vertex is incident to exactly one edge in the subset. The weight of a dimer configuration is the product of the weights of edges present. We denote the set of all dimer configurations of a graph by $\mathcal{M}$.

For spanning trees and dimer configurations, we can always define a probability measure arising from the weighting, where the probability of a configuration is proportional to its weight.

The key object for calculating the partition function of the dimer model is the \textit{Kasteleyn matrix}, see \cite{Kas61} for example. A \textit{Kasteleyn orientation} of a graph is an orientation of edges such that when traveling clockwise around the boundary of a face, the number of co-oriented edges is odd. If the graph is weighted and bi-partite, the Kasteleyn matrix $K$ has rows indexed by black vertices, columns indexed by white vertices, and coefficients defined by:
\begin{eqnarray*}
K_{b,w}=
\begin{cases}
c(bw) \quad &\text{if }b\sim w,\ b\rightarrow w\\
-c(bw) \quad &\text{if }b\sim w,\ b\leftarrow w\\
0 \quad &\text{else,}
\end{cases}
\end{eqnarray*}
where $b$ is a black vertex and $w$ is a white one.

The dimer partition function of a planar graph is equal to $\det K$ up to a sign. For a toroidal graph, choose a simple curve $\gamma_x$ (resp. $\gamma_y$) on the dual of the graph which winds once horizontally (resp. vertically) around the torus. For every edge crossing $\gamma_x$, multiply the corresponding entry by $z$ if its black end is on the left of $\gamma_x$ and by $z^{-1}$ if the white end is on the left, respectively $w$ or $w^{-1}$ for edges crossing $\gamma_y$. Such modified Kasteleyn matrix is denoted by $K(z,w)$. The \textit{characteristic polynomial} is defined as
\begin{eqnarray}
P(z,w)=\det K(z,w).
\end{eqnarray}
\begin{figure}[H]
\centering
\includegraphics[width=0.7\textwidth]{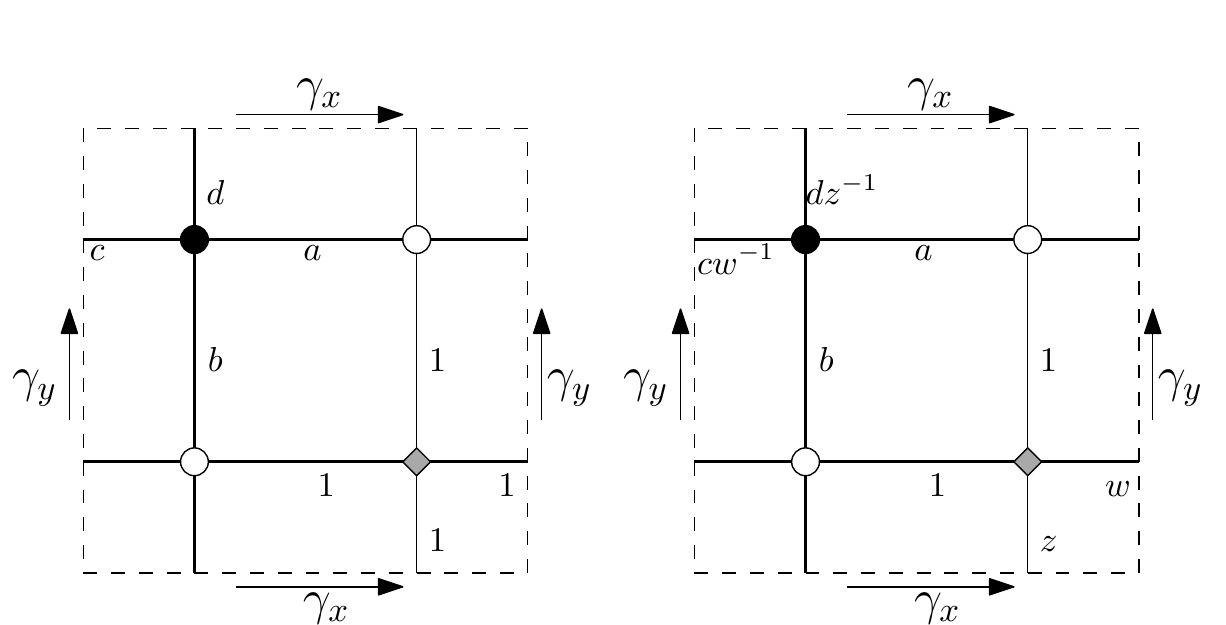}
\caption{Adding $z$ and $w$ for a toroidal double graph.}
\end{figure}

If we let $z=(-1)^{\theta}$ and $w=(-1)^\tau$ and we denote the corresponding Kasteleyn matrix by $K^{(\theta,\tau)}$, then the partition function is a linear combination of $K^{(\theta,\tau)}$, see \cite{Kas67} or \cite{CKP01} for example. After choosing a proper Kasteleyn orientation, we have
\begin{eqnarray}
Z=\frac{1}{2}(-\det K^{(0,0)}+\det K^{(0,1)}+\det K^{(1,0)}+\det K^{(1,1)}).
\end{eqnarray}
\subsection{Temperley's bijection}

The authors of \cite{KPW} construct a general version of Temperley's bijection for directed weighted planar graphs. The construction also applies to graphs on other surfaces. In this paper, we use Temperley's bijection on toroidal graphs and on planar graphs.

\subsubsection{Planar case}

We begin by the planar case. Let $G$ be a planar graph. Suppose that a vertex $v_0$ is incident to a face $f_0$. If $v_0$, $f_0$ and the edges of $G^d$ incident to them are taken away, then we denote the rest of $G^d$ by $G^d(v_0, f_0)$.

\textit{Temperley's bijection} (\cite{KPW}) is defined as a mapping between dimer configurations of $G^d(v_0,f_0)$ and spanning trees of $G$ rooted at $v_0$ (in fact spanning-tree pairs of $G$ and $G^*$), given by the following procedure. We start from a spanning tree $T$ of $G$ rooted at $v_0$. For $G^*$, edges not crossing $T$ form a connected configuration without cycle, thus a tree. We denote it by $T^*$ and call it the dual of $T$. Take $f_0$ as its root. The set of such pairs $(T,T^*)$ is denoted by $\mathcal{T}(G,G^*)$.

\begin{figure}[H]
\centering
\includegraphics[width=0.7\textwidth]{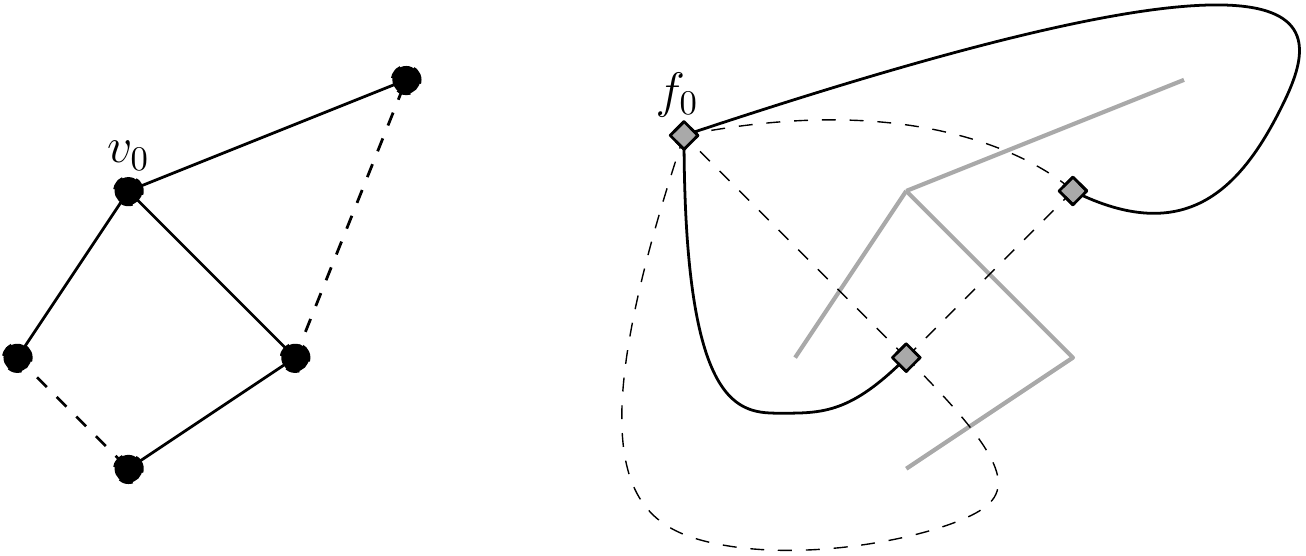}
\caption{Rooted spanning tree $T$ and its dual $T^*$.}
\end{figure}

Let $M$ be a subset of edges of $G^d$. An edge $uw$ of $G^d$ is in $M$ if the directed edge $uv$ is in $T$ or $T^*$, where $w$ is the white vertex of $G^d$ between $u$ and $v$. Edges in $M$ form a perfect matching of $G^d(v_0, f_0)$.

\begin{figure}[H]
\centering
\subfigure[Part of a spanning tree $T$.]{
\includegraphics[width=0.3\textwidth]{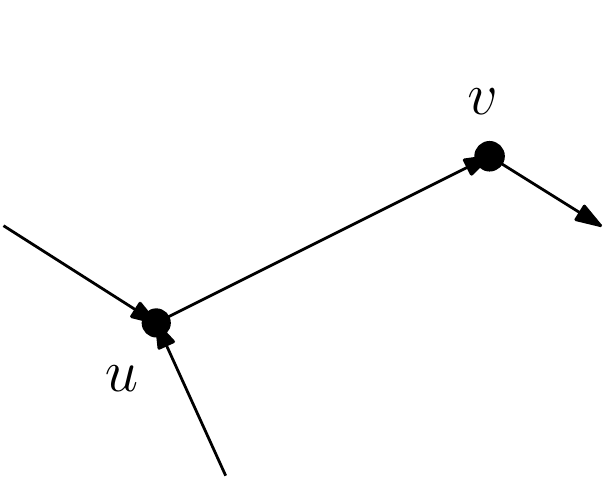}}
\subfigure[Corresponding dimer configuration.]{
\includegraphics[width=0.3\textwidth]{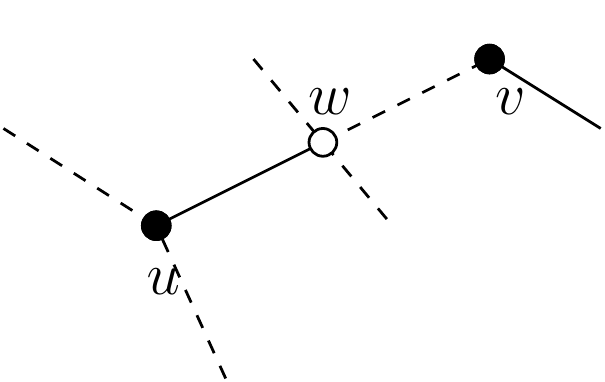}}
\caption{Temperley's bijection from a spanning tree to a perfect matching.}
\end{figure}

\begin{figure}[H]
\centering
\includegraphics[width=0.7\textwidth]{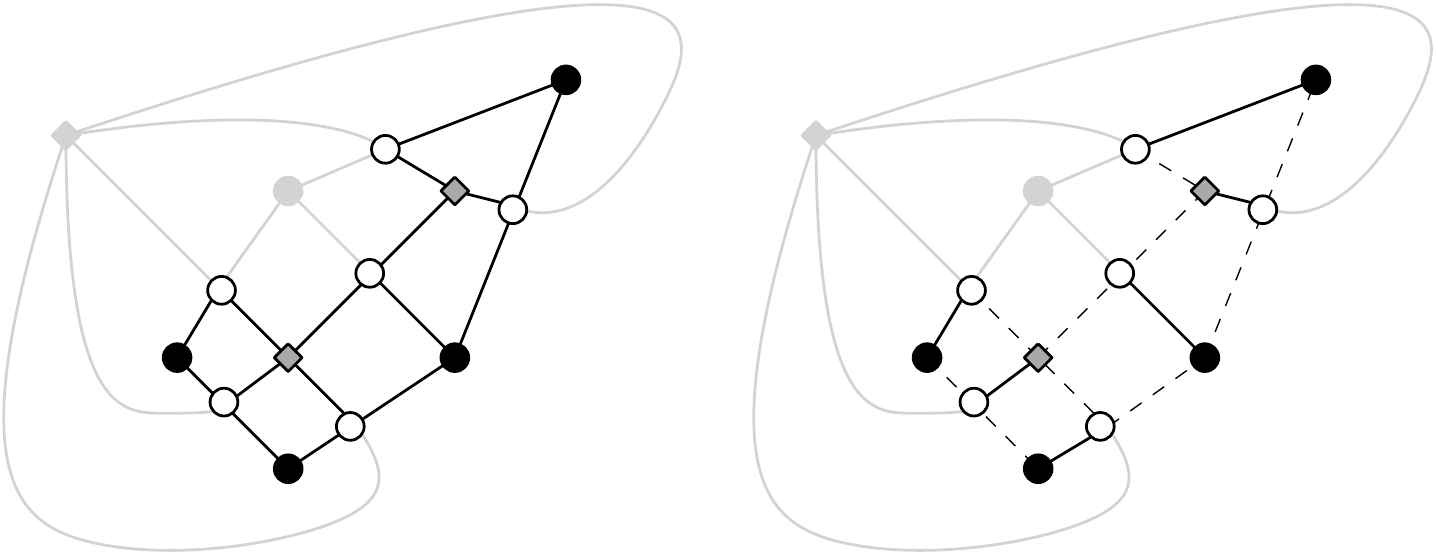}
\caption{Graph $G^d(v_0,f_0)$ and the perfect matching arising from $T$ and $T^*$ in Figure 5.}
\end{figure}

By the same rule, from any perfect matching $M\in\mathcal{M}\big(G^d(v_0, f_0)\big)$, we can build a spanning-tree-pair $(T,T^*)\in\mathcal{T}(G,G^*)$. A directed edge $uv$ of $G$ (resp. $G^*$) is in $T$ (resp. $T^*$) if $uw$ is in $M$.

In \cite{KPW}, the authors prove that such a map is a measure preserving bijection: if Temperley's bijection relates $M\in\mathcal{M}\big(G^d(v_0, f_0)\big)$ to $(T,T^*)\in\mathcal{T}(G,G^*)$, then the probability of $M$ is equal to the probability of $(T,T^*)$, which is defined as
\begin{eqnarray}
\frac{\prod_{\vec{e}\in T}c({\vec{e}})\prod_{{\vec{e^*}}\in T^*}c({\vec{e^*}})}{Z_{\mathcal{T}(G,G^*)}}.
\end{eqnarray}
Here the partition functions $Z_{\mathcal{T}(G,G^*)}$ is the sum of $\prod_{\vec{e}\in T}c({\vec{e}})\prod_{{\vec{e^*}}\in T^*}c({\vec{e^*}})$ over all pairs $(T,T^*)$.

\subsubsection{Toroidal case}

For any toroidal graph $\mathcal{G}$, let $F$ be an $OCRSF$ of $\mathcal{G}$ and $F^*$ be an $OCRSF$ of $\mathcal{G}^*$. We say that $F^*$ is a dual of $F$ if $F$ and $F^*$ don't cross. If $F$ has $k$ connected components, then it has $2^k$ duals. Every dual of $F$ has also $k$ components, and its root-cycles are parallel to those of $F$. We denote the pairs of such dual forests $(F,F^*)$ by $\mathcal{F}(\mathcal{G},\mathcal{G}^*)$. The weight of $(F,F^*)$ is defined as the product of the weights of all directed edges present. This gives rise to a probability measure on $\mathcal{F}(\mathcal{G},\mathcal{G}^*)$:
\begin{eqnarray}
\mathbb{P}(F,F^*)=\frac{\prod_{\vec{e}\in F}c(\vec{e})\prod_{\vec{e^*}\in F^*}c(\vec{e^*})}{Z_{\mathcal{F}(\mathcal{G},\mathcal{G}^*)}},
\end{eqnarray}
where the partition function is:
\begin{eqnarray*}
Z_{\mathcal{F}(\mathcal{G},\mathcal{G}^*)}=\sum_{(F,F^*)\in\mathcal{F}(\mathcal{G},\mathcal{G}^*)}\prod_{\vec{e}\in F}c(\vec{e})\prod_{\vec{e^*}\in F^*}c(\vec{e^*}).
\end{eqnarray*}

If we suppose that the weights of the edges of $\mathcal{G}^*$ are all $1$ (the by-default-setting for $\mathcal{G}^d$ arising from $\mathcal{G}$), then the probability of $(F,F^*)$ is:
\begin{eqnarray}
\mathbb{P}(F,F^*)=\frac{\prod_{\vec{e}\in F}c({\vec{e}})}{Z_{\mathcal{F}(\mathcal{G},\mathcal{G}^*)}}.
\end{eqnarray}

Summing over all $\mathcal{G}^*$, this gives rise to a probability measure on $OCRSF$ of $\mathcal{G}$ not proportional to weights. The weight of a configuration is multiplied by a factor $2^{k}$ where $k$ is the number of its connected components. Such measure, if compared to the normal weighted measure on $OCRSF$ of $\mathcal{G}$, encourages configurations to have more cycles.

Temperley's bijection between $OCRSF$ pairs of $\mathcal{G},\mathcal{G}^*$ and dimer configurations of $\mathcal{G}^d$ is defined as in the planar case. It is easy to verify that it is indeed a bijection and is measure preserving.

\subsection{Height function}

Following \cite{KPW}, given a dimer configuration of the planar bipartite graph $G^d$, we define a \textit{height function} on faces of $G^d$ as follows.

We suppose that $G^d$ is embedded. Note that every face of $G^d$ is a quadrilateral with two black vertices and two white vertices. When we say a \textit{diagonal} of a face, we mean the one linking two opposite black vertices. A dimer can be viewed as a cut on the plane. Given a dimer configuration $M$, we choose a face as base (the diagonal of this face has $0$ height), then prolong this to its neighboring diagonals by the turning angle without passing cuts. This can be prolonged to the whole plane(\cite{KPW}). The height $h^M(.)$ of a face is defined as the height of its diagonal. Note that the height function such defined depends on the embedding of the graph on the plane.

\begin{figure}[H]
\centering
\includegraphics[width=0.55\textwidth]{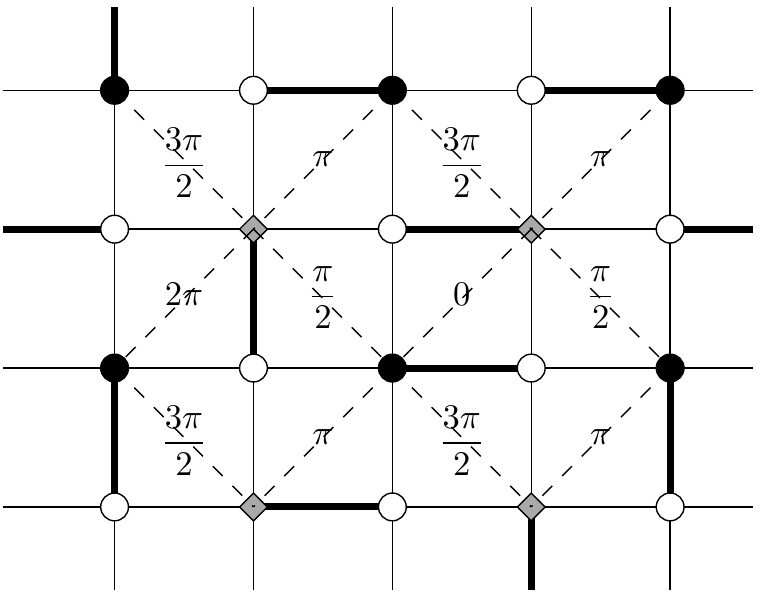}
\caption{Height function corresponding to a local dimer configuration.}
\end{figure}

Note that there is another natural definition of height function, which depends on the choice of a reference configuration $M_0$, see \cite{KOS06} for example. We denote the height function of $M$ by $\tilde{h}^{(M,M_0)}$. These two definitions are coherent:
$$\tilde{h}^{(M,M_0)}=(h^M-h^{M_0})/2\pi.$$

In the remainder of this paper, when we speak of height, we use the first definition $h$ by default.

For the infinite $\mathbb{Z}^2$-periodic planar graph $G$, its double graph $G^d$ is also $\mathbb{Z}^2$-periodic. A dimer configuration $M$ of $G^d$ gives a height function $h^M$ on the whole plane. If $M$ is also $\mathbb{Z}^2$-periodic, then it gives rise to a dimer configuration of $\mathcal{G}^d_1$. Let $(\hat{x},\hat{y})$ be a base of $\mathbb{Z}^2$. The height function $h^M$ induces a height change $(h_x^M,h_y^M)$, where $h_x^M$ (resp. $h_y^M$), for any face $f$ of $G^d$, is equal to $h^M(f+\hat{x})-h^M(f)$ (resp. $h^M(f+\hat{y})-h^M(f)$), whose value doesn't depend on the choice of $f$.

Proposition 3.1 in \cite{KOS06} shows that the characteristic polynomial (2.1) can be interpreted by the height changes as follows:
\begin{eqnarray}
P(z,w)=\sum_{M\in\mathcal{M}(\mathcal{G}^d_1)}\big(\prod_{e\in M}c(e)\big) z^{-h_x^M}w^{-h_y^M}(-1)^{h^M_x h^M_y+h^M_x+h^M_y}.
\end{eqnarray}

Temperley's bijection relates a dimer configuration $M$ of $G^d(v_0,f_0)$ to a spanning tree $T$ of $G$. The height function $h^M$ has a natural relation to the \textit{winding} of $T$, which is defined, for a finite directed path on $G$, as the total angle of the left turns minus the right turns along this path.

A \textit{branch} $\gamma=(v_1,...,v_{r+1})$ of an oriented tree $T$ is a finite directed path of $T$ keeping co-oriented or anti-oriented with the orientation of $T$ (either every edge $v_iv_{i+1}$ is oriented from $v_i$ to $v_{i+1}$ or from $v_{i+1}$ to $v_i$).

Denote the white vertex between $v_i$ and $v_{i+1}$ in $G^d$ by $w_i$. Let $(f_1,...,f_r)$ be the faces of $G^d$ lying on the left of $\gamma$ and every $f_i$ is incident to $v_i e_i$. Note that $f_i$ and $f_{i+1}$ are not neighboring in $G^d$. For any $i\in\{1,...,r\}$, define $\alpha^T(f_i)$ as the counterclockwise angle from the vector $v_i e_i$ to the diagonal of $f_i$. Note that for given $T$, $h^M(f_i)-\alpha^T(f_i)$ only depends on the vertex $v_i$ and doesn't depend on the choice of path (or face).

Theorem 3 in \cite{KPW} proves that under planar Temperley's bijection, winding of a branch $\gamma=(v_1,...,v_r)$ is equal to $\big(h^M(f_r)-\alpha^T(f_r)\big)-\big(h^M(f_1)-\alpha^T(f_1)\big)$.

To simplify notations, we define a height function on vertices of $G$: for any $T$ and $v\in T$, chose a branch passing $v$, define $h^T(v)$ as $h^M(f)-\alpha^T(f)$ where $f$ is a face incident to $v$ as above. So the theorem above says that going along a path, the change of height $h^T$ is equal to the winding.

On the torus, Temperley's bijection maps a dimer configuration $M$ of $\mathcal{G}^d_1$ to an $OCRSF$ pair $(F,F^*)$ of $\mathcal{G}_1$ and $\mathcal{G}^*_1$. Height change $(h^M_x,h^M_y)$ is closely related to the homology class of $(F,F^*)$. This fact is already shown by authors of \cite{Dub}. Here we give another proof because some geometric facts revealed in this proof are useful in the subsequent parts of this paper.

Suppose that $F$ has $k$ connected components, each component containing a root-cycle of homology class $\pm(m,n)$, $m,n\in\mathbb{Z}$, where we choose $m$ to be non-negative, and when $m=0$ we choose $n$ to be positive. Note that $m$ and $n$ are relatively prime. Suppose that there are $k_1$ (resp. $k_2$) primal (resp. dual) root-cycles of homology class $(m,n)$, then:

\begin{prop}(\cite{Dub})
The height change of $M$ can be expressed as a signed sum of homology classes of $OCRSF$ of ${\mathcal{G}}_1$ and of $\mathcal{G}^*_1$, as below:
$$h^M_x=-n(k-k_1-k_2),$$
$$h^M_y=m(k-k_1-k_2).$$
\end{prop}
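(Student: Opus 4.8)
The plan is to compute the two height changes $h^M_x$ and $h^M_y$ by integrating the winding along a well-chosen closed loop on the torus, using the vertex height function $h^T$ introduced above and its relation to winding. Concretely, I would first lift everything to the universal cover: choose a horizontal fundamental domain and track how the diagonal height $h^M$ of $G^d$ changes as one translates by $\hat x$ and by $\hat y$. Since $h^M_x$ (resp. $h^M_y$) is independent of the base face, it suffices to evaluate it along a convenient path.

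The key step is to relate the height change across a fundamental domain to the root-cycles of $(F,F^*)$. On the torus every edge of $\mathcal{G}^d_1$ belongs either to a primal tree-or-cycle edge or to a dual one, and the dimer $M$ sits between them as a ``cut''. I would argue that the only contributions to the net height change as one goes once around the torus in a direction transverse to the common homology class $(m,n)$ come from crossings of root-cycles: each primal root-cycle of class $(m,n)$ and each dual root-cycle of class $(m,n)$ acts, in the universal cover, as a barrier whose crossing forces a height jump, while all tree branches (which are contractible, hence wind by a net zero amount over any closed excursion by the winding$=$height-change identity of \cite{KPW}) contribute nothing on balance. Counting: there are $k$ components, of which $k_1$ primal and $k_2$ dual have class exactly $(m,n)$; the remaining $k-k_1$ primal and $k-k_2$ dual root-cycles have class $-(m,n)$, so crossing the torus once transversally to $(m,n)$ meets a signed total proportional to $(k-k_1)+(k-k_2)-k_1-k_2 = 2(k-k_1-k_2)$ of them, and after dividing by the factor $2$ built into the height normalization (the diagonals of adjacent faces differ by $\pm 1$ per unit, cf.\ the $1/2\pi$ in $\tilde h$), one gets $k-k_1-k_2$. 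The direction of the jump is orthogonal to $(m,n)$, which produces $-n$ in the $x$-component and $m$ in the $y$-component; this explains the asymmetric signs and the appearance of $(-n,m)=(m,n)^\perp$ in the statement.

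I would then make this precise by the following bookkeeping. Pick a simple dual curve $\gamma_x$ winding once horizontally; its algebraic intersection number with a root-cycle of homology $(p,q)$ is $\pm q$. Summing over all primal root-cycles, the primal ones of class $(m,n)$ contribute $k_1 n$ and those of class $-(m,n)$ contribute $-(k-k_1)n$; likewise for the dual ones. Feeding these intersection numbers into the winding/height correspondence (applied branch-by-branch, noting that passing from one branch to the next through the cycle the angle corrections $\alpha^T$ telescope away exactly as in \cite{KPW}), one obtains $h^M_x = -n\big[(k-k_1)+(k-k_2) - k_1 - k_2\big]/\text{(the normalization)} = -n(k-k_1-k_2)$, and the same computation with a vertical curve $\gamma_y$, whose intersection with $(p,q)$ is $\mp p$, yields $h^M_y = m(k-k_1-k_2)$.

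The main obstacle I anticipate is the careful treatment of the ``non-root'' part of the configuration: one must show rigorously that the tree branches leading into the cycles, together with their dual partners, contribute nothing to the net height change around the torus, and that the height function $h^M$ is genuinely well-defined (single-valued up to the stated periods) on $\mathcal{G}^d_1$ — i.e.\ that there are no contractible cuts, which is exactly the no-contractible-cycle condition in the definition of $OCRSF$. Handling the bookkeeping of signs and the exact normalization constant (the factor relating a unit turn of a diagonal to one unit of $h^M$, and the sign conventions for which end of a crossing edge is ``on the left'') is where the geometric care is needed; the identity $\tilde h = (h^M - h^{M_0})/2\pi$ and the branch-winding theorem of \cite{KPW} are the two tools that make it go through.
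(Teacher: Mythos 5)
Your proposal is correct and follows essentially the same route as the paper's proof: lift to the universal cover, build a path transverse to the root-cycles, invoke the KPW winding--height identity, use periodicity to kill the net winding of the path, and attribute the residual height change to signed $\pm\tfrac12$ contributions at each crossing of a (primal or dual) root, counted via intersection numbers of the homology classes. The only place you are slightly looser than the paper is the origin of the factor $\tfrac12$ per crossing, which the paper derives from the explicit $\pm\pi$ correction at each jump/join rather than from the face-diagonal normalization, but the bookkeeping comes out the same.
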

\begin{proof} If $M$ is a dimer configuration of $\mathcal{G}^d_1$, then it gives rise to a $\mathbb{Z}^2$-periodic dimer configuration of $G^d$. Via Temperley's bijection, on the $\mathbb{Z}^2$-periodic graphs $G$ and $G^*$ this gives a pair of oriented spanning forest rooted on infinite paths. Each of its connected component is a tree, and we call a tree on $G$ as primal tree, a tree on $G^*$ as dual tree, and the only infinite path of a tree as its root.

Let us study the height change along the $y$ axis. Take a vertex $v_0$. We can choose a path on the infinite planar graph $G$ between $v_0$ and $v_1=v_0+\hat{y}$ in the following way.

On $G$ there are $mk$ primal trees $T$ and $mk$ dual trees $T^*$ between $v_0$ and $v_1$. Any edge on the root of a $T^*$ ($e$ in the figure) has two neighboring $G$-vertices ($v$ and $v'$ in the figure), lying on each side of the root. For both of them we follow the branches before we arrive at their roots. This gives a path between roots of two neighboring primal trees. We also allow walking along the roots of primal trees. Thus, by choosing one edge on every dual tree $T^*$, we construct a path on $G$ from $v_0$ to $v_1$ with $k$ jumps over the roots of dual trees.

\begin{figure}[H]
\centering
\subfigure[How to choose the neighboring $G$-vertices of $e$.]{
\includegraphics[width=0.35\textwidth]{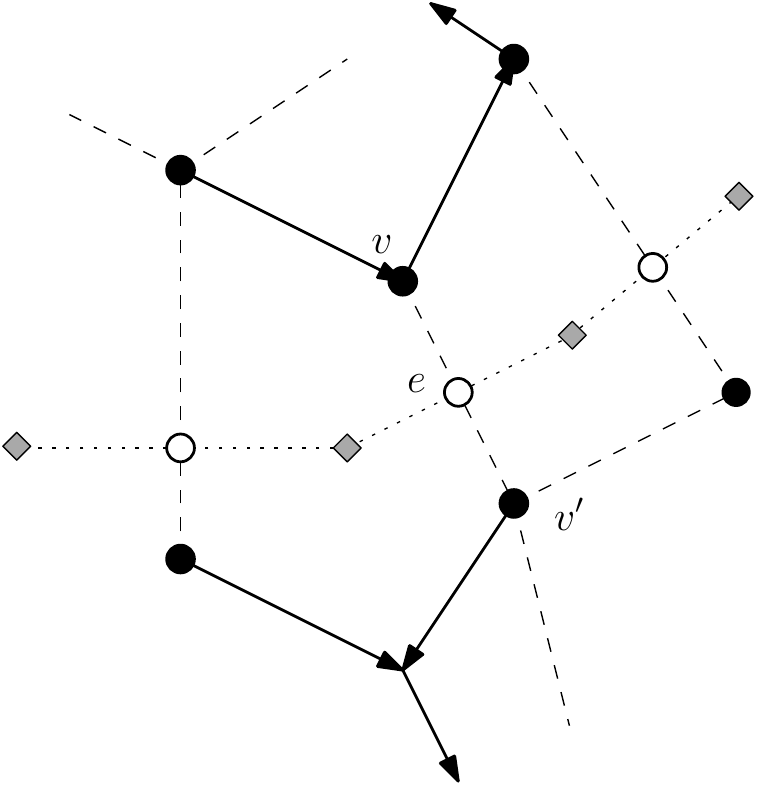}}
\subfigure[How to construct such path.]{
\includegraphics[width=0.55\textwidth]{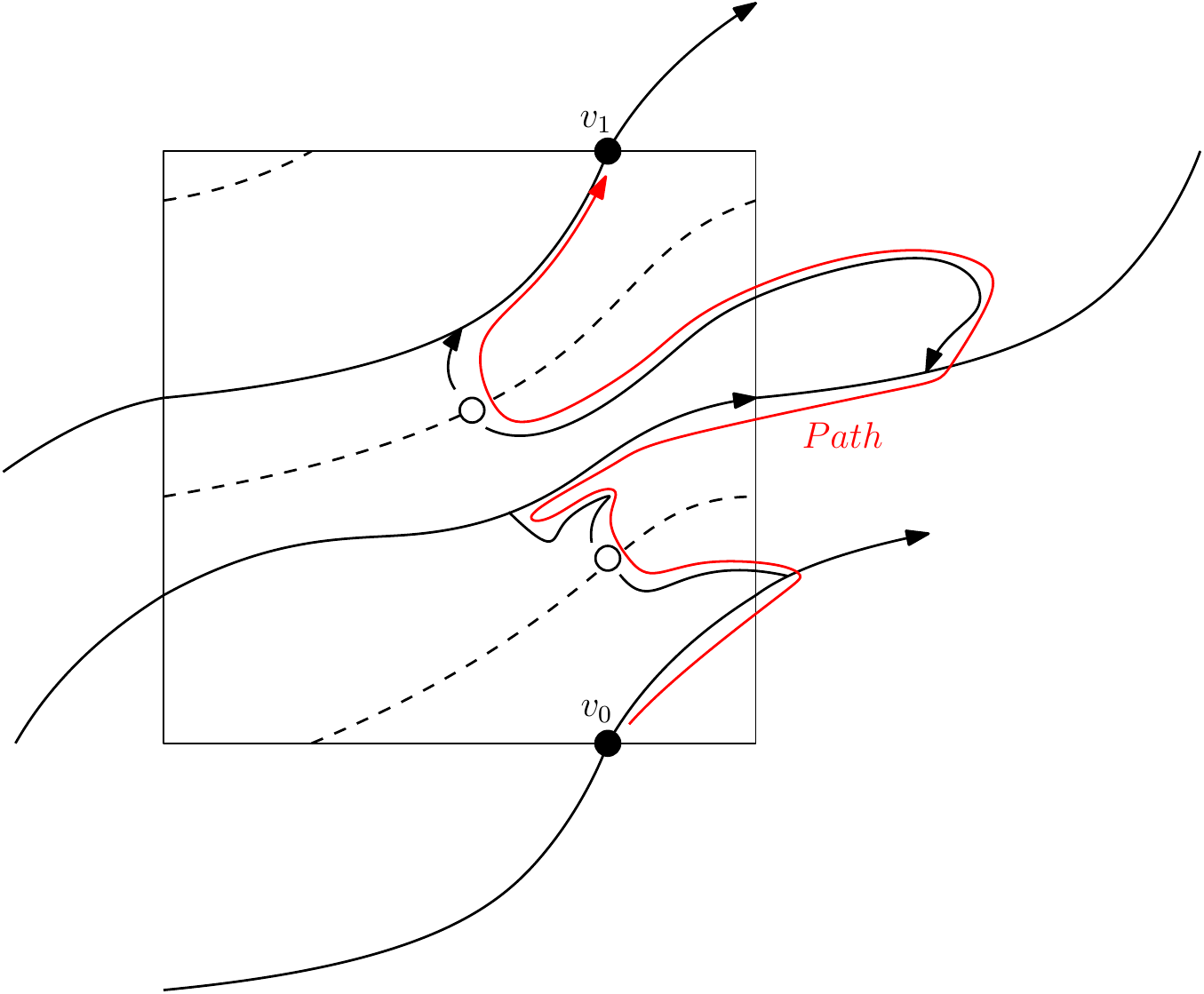}}
\caption{Construction of a periodic path.}
\end{figure}

When such a path jumps over a dual root, the local height change is the winding (by adding an imaginary edge between the two ends of the jump) plus $-\pi$ if it is from right to left over the root, and $\pi$ if it is from left to right. Walking along a path always co-oriented or anti-oriented gives a height change equal to winding. Entering a root, walking along the root and exiting into another branch, the observed orientation is reversed exactly once (from co-orientated to anti-oriented), either at the time of entering the root or the time of exiting the root. In both cases it can be viewed as joining another path and reversing the orientation. Joining from the right side of another path means a height change equal to winding plus $-\pi$ and from the left side means winding plus $\pi$. The proof is geometrical, as illustrated in the following figures.

After normalizing the height change by $2\pi$, we conclude that the total height change from $v_0$ to $v_1$ is the winding plus $\frac{1}{2}(-a+b)$ where $a$ is the number of the crosses over the roots of both primal and dual trees from right to left along the path, and respectively $b$ is the number of crosses from left to right. Since such path can be repeated between $v_0+n\hat{y}$ and $v_0+(n+1)\hat{y}$ for any $n\in\mathbb{Z}$ without self-joint, the winding of this path between $v_0$ and $v_1$ is $0$.

The height change along the $x$-axis is similar.

\qed
\end{proof}

\begin{figure}[H]
\centering
\subfigure[Jump: Winding=$(\alpha_1-\pi)+(\pi-\alpha_2)-(\pi-\alpha_3)=h+\pi$.]{
\includegraphics[width=0.35\textwidth]{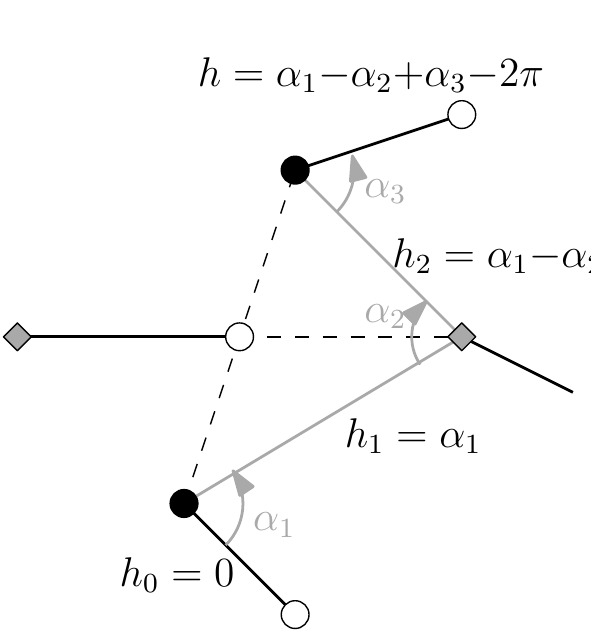}}
\subfigure[Join: Winding=$\pi-\alpha=h+\pi$.]{
\includegraphics[width=0.35\textwidth]{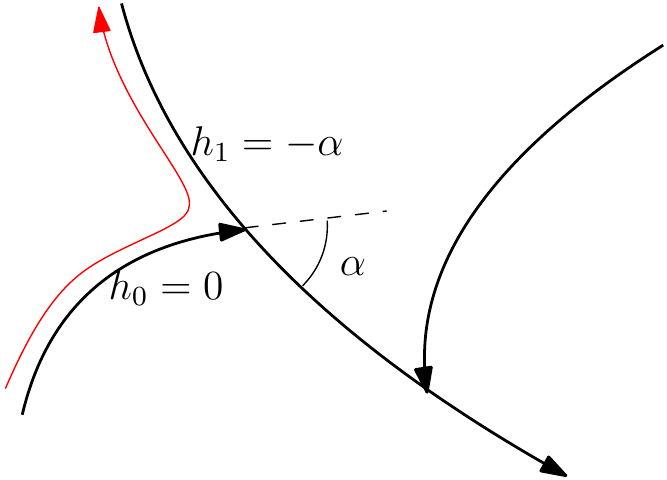}}
\caption{Jump or join from the right side of a directed path.}
\end{figure}

\section{Laplacian, Kasteleyn Matrix and measure}

A dimer probability measure on the $\mathbb{Z}^2$-periodic planar graph $G^d$ is characterized by the infinite inverse Kasteleyn matrix $K^{-1}$. By Temperley's bijection, the dimer measure gives rise to a measure on the directed edges of $G$. For the later one, it is more natural to characterize it by Laplacian, and if we take into account a magnetic field $B$, then we should consider the Laplacian with connection (Section 3.1). This characterization (Theorem 3.2) will be used in Section 5 to study the topology property of the configurations under the limiting measure.

\subsection{Laplacian with connection}

Following \cite{Ken11}, for a finite graph $G=(V,E)$, to every $v\in V$ and $e\in E$ we assign a space isomorphic to $\mathbb{C}$, denoted by $\mathbb{C}_v$ and $\mathbb{C}_e$. A \textit{connection} $\Phi$ on the graph $G$ is the choice for every edge $\vec{e}=uv$ an isomorphism $\phi_{uv}:\mathbb{C}_u\rightarrow \mathbb{C}_{v}$ such that $\phi_{uv}=\phi_{vu}^{-1}$. This isomorphism is called the \textit{parallel transport} from $\mathbb{C}_u$ to $\mathbb{C}_{v}$. This is generalized by assigning for every $\vec{e}=uv$ an isomorphism $\phi_{ve}:\mathbb{C}_v\rightarrow \mathbb{C}_e$ with the property that $\phi_{ve}=\phi_{ev}^{-1}$ and letting $\phi_{uv}=\phi_{ev}\circ\phi_{ue}$.

On a weighted and directed graph $G$, the \textit{Laplacian} associated to this connection $\Phi$ is the operator $\Delta^{\Phi}:\mathbb{C}^V\rightarrow\mathbb{C}^V$ defined by
$$\Delta^{\Phi}f(v)=\sum_{u\sim v}c_{vu}(f(v)-\phi_{uv}f(u)),$$
where the sum is over all neighbors of $v$.

The Laplacian can be decomposed in the following way. We fix an arbitrary edge orientation, then edges in $E$ can be viewed as directed edges. Let $\Lambda^0(G,\Phi)$ be the space of 0-forms and $\Lambda^1(G,\Phi)$ be that of 1-forms. Define $d:\Lambda^0(G,\Phi)\rightarrow\Lambda^1(G,\Phi)$ and $d^*:\Lambda^1(G,\Phi)\rightarrow\Lambda^0(G,\Phi)$ as
$$df(\vec{e})=\phi_{ve}f(v)-\phi_{v'e}f(v'),$$
$$d^*(\omega)(v)=\sum_{\vec{e}=v'v}c_{vv'}\phi_{ev}\omega(\vec{e}),$$
and we have the decomposition
\begin{eqnarray}
\Delta^{\Phi}=d^*d.
\end{eqnarray}

The Laplacian $\Delta^{\Phi}$ and the operators $d$ and $d^*$ can all be written in matrix form. The matrix $d^*$ has rows indexed by vertices of $G$ and columns indexed by edges of $G$ with chosen orientation, while $d$ has rows indexed by edges of chosen orientation and columns indexed by vertices. Note that the operator $d^*$ is a part of the Kasteleyn matrix, see Section 3.3.

In \cite{For}, the author proves that:
\begin{eqnarray}
\det\Delta^{\Phi}=\sum_{OCRSF}\prod_{\vec{e}\in F}c(\vec{e})\prod_{cycles\ \gamma}(1-w(\gamma)),
\end{eqnarray}
where the second product is the sum over all directed cycles $\gamma$ and $w$ is the monodromy of this cycle.

Equations (2.6), (3.8) and Proposition 2.1 yield the following proposition as a corollary, which says that the dimer characteristic polynomial of the double graph $\mathcal{G}^d$ arising from a toroidal graph $\mathcal{G}$ is also some Laplacian with connection. See also \cite{BdT08} where the authors prove this result for isoradial graphs.

\begin{prop}
On $\mathcal{G}$, choose two paths $\gamma_x$ and $\gamma_y$ on its dual graph, respectively winding once horizontally or vertically, and choose parallel transport as follows: $\phi_{vv'}=z$ (resp. $w$) if $vv'$ traverses $\gamma_x$ (resp. $\gamma_y$) from left to right, and $\phi_{vv'}=z^{-1}$ (resp. $w^{-1}$) if it traverses from right to left, otherwise let $\phi=1$, then
$$P_{dimer}(z,w)=\det\Delta^{\Phi}.$$
\end{prop}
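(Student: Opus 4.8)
The plan is to identify the determinant $\det\Delta^\Phi$ with the dimer characteristic polynomial $P_{dimer}(z,w)$ by matching the monomial expansions of both sides term by term. First I would apply Forman's formula (3.8) to the Laplacian with connection $\Delta^\Phi$ on $\mathcal{G}_1$ (equivalently on the quotient $\mathcal{G}$ with the chosen $z,w$-connection along $\gamma_x,\gamma_y$). For a given $OCRSF$ $F$ with $k$ connected components, each component has a single root-cycle, and all these cycles are homotopic to some primitive class $\pm(m,n)$. The monodromy of a root-cycle of class $(m,n)$ is $z^m w^n$ (or its inverse, depending on orientation); contractible cycles have trivial monodromy, but they don't occur in an $OCRSF$. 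Hence the contribution of $F$ to $\det\Delta^\Phi$ is $\prod_{\vec e\in F}c(\vec e)\prod_{j=1}^k\bigl(1-z^{\pm m}w^{\pm n}\bigr)$, where the signs record the orientations of the $k$ root-cycles.

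Next I would expand $\prod_{j=1}^k(1-z^{\epsilon_j m}w^{\epsilon_j n})$ and reorganize the full sum over all $OCRSF$ by grouping, for each $F$, over the choices of which subset $S$ of its components gets the "$-z^{\pm m}w^{\pm n}$" factor. A sub-collection $S$ of components with assigned orientations should be matched, via Temperley's bijection and the duality $F\leftrightarrow F^*$, with a dual $OCRSF$ $F^*$ of $\mathcal{G}^*_1$: recall from Section 2.2 that $F$ with $k$ components has exactly $2^k$ duals, each obtained by choosing an orientation for each of the $k$ root-cycles of the dual forest, and each dual has root-cycles parallel to those of $F$. Thus a pair $(F,F^*)\in\mathcal{F}(\mathcal{G}_1,\mathcal{G}^*_1)$ exactly encodes an $OCRSF$ $F$ together with an orientation choice on each component, which is precisely the data indexing a term in the expanded Forman sum. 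The sign $(-1)^{|S|}$ and the homology monomial then need to be matched, using Proposition 2.1, with the $z^{-h^M_x}w^{-h^M_y}(-1)^{h^M_xh^M_y+h^M_x+h^M_y}$ weight in Kenyon–Okounkov–Sheffield's formula (2.6) for $P_{dimer}(z,w)=P(z,w)$.

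The concrete bookkeeping runs as follows. By Proposition 2.1, if $F$ has $k_1$ primal and $k_2$ dual root-cycles of class exactly $(m,n)$ (the others being $-(m,n)$), then $h^M_x=-n(k-k_1-k_2)$ and $h^M_y=m(k-k_1-k_2)$, so $z^{-h^M_x}w^{-h^M_y}=z^{n(k-k_1-k_2)}w^{-m(k-k_1-k_2)}$; I would reconcile this with $(z^{\pm m}w^{\pm n})^{\#}$ arising from Forman's product by checking that a $(+)$-oriented primal cycle and a $(+)$-oriented dual cycle contribute oppositely to the exponent — exactly the mechanism producing the combination $k-k_1-k_2$. Similarly the sign $(-1)^{|S|}$ coming from the $k$ factors $(1-\cdot)$ must equal $(-1)^{h^M_xh^M_y+h^M_x+h^M_y}$; since $m,n$ are coprime this parity reduces to a statement about $k-k_1-k_2 \pmod 2$ and the parity of $mn$, $m$, $n$, which I would verify by a short case analysis on the residues of $m$ and $n$ mod $2$. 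Finally, the measure-preserving property of Temperley's bijection (as recalled in Section 2.2) guarantees that the edge-weight products $\prod_{\vec e\in F}c(\vec e)\prod_{\vec e^*\in F^*}c(\vec e^*)$ match; with $\mathcal{G}^*$ unweighted this is just $\prod_{\vec e\in F}c(\vec e)$, agreeing with Forman's formula.

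The main obstacle I anticipate is the sign reconciliation: showing that the purely combinatorial sign $(-1)^{\#\{\text{chosen components}\}}$ in Forman's expansion coincides, under the bijection, with the Kasteleyn-type sign $(-1)^{h^M_xh^M_y+h^M_x+h^M_y}$ attached to the corresponding dimer configuration. This requires carefully tracking how the orientation of each primal versus dual root-cycle affects both the homology monomial (via Proposition 2.1) and the sign, and using coprimality of $(m,n)$ to collapse the parity computation; it is essentially a verification that the "$-1$" per component in $\prod(1-w(\gamma))$ is absorbed correctly into the $(\theta,\tau)$-sector signs of (2.2). Once the signs and exponents are matched for every homotopy type $(m,n)$ and every component count, the identity $P_{dimer}(z,w)=\det\Delta^\Phi$ follows by summing.
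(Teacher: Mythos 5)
Your proposal is correct and is essentially the paper's own argument: the paper likewise combines the Kenyon--Okounkov--Sheffield expansion (2.6), Proposition 2.1, Forman's formula (3.8), and the count of the $2^k$ duals of an $OCRSF$ under Temperley's bijection (a binomial sum over $k_2$), and reduces the sign $(-1)^{h_xh_y+h_x+h_y}$ to $(-1)^{k-k_1-k_2}$ using coprimality of $(m,n)$, exactly as you outline. The only difference is cosmetic: you expand $\det\Delta^{\Phi}$ via Forman and match it to the dimer sum, whereas the paper starts from the dimer sum and collapses it into Forman's product.
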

\begin{proof}Since $m,n$ in Proposition 2.1 are relatively prime, they can not be both even. Let $a=(k-k_1-k_2)$, then the sign in (2.6) can be simplified as:
$$(-1)^{h_x h_y+h_x+h_y}=(-1)^{-nma^2-na+ma}=(-1)^a.$$
So,
\begin{eqnarray}
 P_{dimer}(z,w)
 &=&\sum_{M\in\mathcal{M}(\mathcal{G}^d)}\prod_{e\in M} c(e) z^{-h_x}w^{-h_y}(-1)^{h_x h_y+h_x+h_y}\nonumber \\
 &=&\sum_{(F,F^*)\in\mathcal{F}({\mathcal{G}},{\mathcal{G}}^*)}\prod_{\vec{e}\in F} c(\vec{e}) z^{-h_x}w^{-h_y}(-1)^{h_x h_y+h_x+h_y}\nonumber \\
 &=&\sum_{F\in OCRSF({\mathcal{G}})}\prod_{\vec{e}\in F} c(\vec{e})\sum_{k_2=0}^{k_2=k}
 {k\choose k_2}(-z^{n}w^{-m})^{k-k_1-k_2}\nonumber \\
 &=&\sum_{F\in OCRSF({\mathcal{G}})}\prod_{\vec{e}\in F} c(\vec{e})\cdot(1-z^{-n}w^{m})^{k_1}(1-z^{n}w^{-m})^{k-k_1}.
\end{eqnarray}
\qed
\end{proof}\\

\subsection{Magnetic field and connection}

Let $G$ be a $\mathbb{Z}^2-$periodic graph, $\mathcal{G}_N=G/(N\mathbb{Z})^2$. Following \cite{KOS06}, in the dimer model, by adding a \textit{magnetic field} $B=(B_x,B_y)$ on the toroidal bipartite graph $\mathcal{G}^d_N$, we mean choosing two dual paths $\gamma_x$ and $\gamma_{y}$ in $\mathcal{G}^d_1$ winding once horizontally or vertically around the torus, and for every edge of $\mathcal{G}^d_N$, if its copy in $\mathcal{G}^d_1$ cross $\gamma_{x}$ (resp. $\gamma_{y}$), multiplying its weight by $e^{\pm B_{x}}$ (resp. $e^{\pm B_{y}}$).

This is gauge-equivalent to choosing two dual paths $\gamma^N_x$ and $\gamma^N_y$ in $\mathcal{G}_N^d$ winding once horizontally or vertically around the torus, and letting $z=e^{NB_x}$ and $w=e^{NB_y}$ for $z$ and $w$ as in Proposition 3.1.

By Temperley's bijection, such modification gives rise to a modification of the weights of directed edges of $\mathcal{G}$ and $\mathcal{G}^*$: it changes the weight of a directed edge of $\mathcal{G}$ or $\mathcal{G}^*$ whose copy in $\mathcal{G}_1$ or $\mathcal{G}_1^*$ crosses $\gamma_x$ or $\gamma_y$ in one direction but not in the other, depending on the choice of $\gamma_x$ and $\gamma_y$ and the position of the edge. Proposition 3.1 says that an equivalent way to have this is just to consider the primal $OCRSF$ with corresponding parallel transport.

The dimer measure with magnetic field $B=(B_x,B_y)$ yields a natural probability measure on $OCRSF$ of $\mathcal{G}_N$. The partition function with this modification is a direct corollary of Proposition 3.1. It is equal to
$$\prod_{\vec{e}\in F} c(\vec{e})\cdot(1+e^{-nNB_x+mNB_y})^{k_1}(1+e^{nNB_x-mNB_y})^{k-k_1},$$
where the notations $m$, $n$, $k$ and $k_1$ are as in Proposition 3.1. In fact, if we replace all terms as $-z^{-n}w^{m}$ and $-z^{n}w^{-m}$  in (3.9) by $1$, the right hand side of (3.9) is
$$\sum_{F\in OCRSF({\mathcal{G}}_N)}2^k\prod_{\vec{e}\in F} c(\vec{e}),$$
which is the partition function without magnetic field.

\subsection{Laplacian and inverse of Kasteleyn matrix on finite graphs}

Equation (3.7), as is mentioned, can also be viewed as a matrix multiplication. Let $G$ be a finite graph, here we suppose that $G$ is planar or toroidal. For any given edge-orientation of $G$, the matrix $d^*$ is indexed by vertices of $G$ on rows and by edges of $\mathcal{G}$ with chosen orientation on columns, while $d$ is indexed by edges with chosen orientation on rows and by vertices on columns.

Any given edge-orientation of $G$ generates an orientation on edges of $G^*$, where the orientation of $\vec{e}^*$ is from the left side of $\vec{e}$ to its right side. If edges of the double graph $G^d$ inherit the orientation of those of $E$ and $E^*$, then its orientation is a Kasteleyn orientation. To see this, we remark that every simple face is a quadrilateral and we just need to verify the 4 possible cases.

We can also define the operators $d^*_{dual}$ and $d_{dual}$ as analog of $d^*$ and $d$ on the dual graph $G^*$.

If $G$ is toroidal, choose the connection corresponding to the magnetic field $B$ as in Proposition 3.1 and $(\theta,\tau)$ as in (2.2), and if $G$ is planar, we take a trivial connection. Then $\Delta^{\Phi}$ is the Laplacian with this connection, and the matrix
$$K=
\begin{pmatrix}
d^{*} \\
d^{*}_{dual}
\end{pmatrix}
$$
is a Kasteleyn matrix (with $z$ and $w$ in toroidal case) whose rows are indexed by vertices of $G$ and $G^*$ and whose columns can either be viewed as directed edges or as white vertices of $G^{d}$.

Similarly we define the matrix
$$
M=
\begin{pmatrix}
d & d_{dual}
\end{pmatrix}.
$$

Let $v\in\mathcal{G}$ and $v^*\in \mathcal{G}^*$ be opposite black vertices in any quadrilateral of $\mathcal{G}^d$, and denote the white vertices in this quadrilateral by $e_1$ and $e_2$. Without losing generality we suppose that dual edges are oriented form $v^*$ to $e_{1}$ and $e_2$, so the oriented primal edges are $e_1v$ and $ve_2$. Note that $c(v^*e_{1})=c(v^*e_2)=1$, we have
$$(d_{dual}^*d)_{v^*,v}=\phi_{e_1v^*}\phi_{ve_1}-\phi_{e_2v^*}\phi_{ve_2}=0,$$
and in other cases this value is trivially $0$.

Thus, we can write a matrix equation:
\begin{eqnarray}
K M
=
\begin{pmatrix}
d^* \\
d^*_{dual}
\end{pmatrix}
\begin{pmatrix}
d & d_{dual}
\end{pmatrix}
=
\begin{pmatrix}
\Delta^{\Phi} & \star \\
0 & \Delta^{\Phi}_{dual}
\end{pmatrix}.
\end{eqnarray}

By taking inverse of $K$ (when invertible), we have:
\begin{eqnarray}
K^{-1}
\begin{pmatrix}
\Delta^{\Phi} & \star\\
0 & \Delta^{\Phi}_{dual}
\end{pmatrix}
=M.
\end{eqnarray}

We will see that equation (3.11) gives a useful characterization of $K^{-1}$ for studying the limiting behavior of $OCRSF$.

\subsection{Infinite Laplacian and inverse of Kasteleyn matrix}

Now we focus on a $\mathbb{Z}^2$-periodic graph $G$ and its quotient graph $\mathcal{G}_N$. We denote the Kasteleyn matrix on $\mathcal{G}_N$ by $K_N$ (keep in mind that this is the Kasteleyn matrix corresponding to $B$ and $(\theta,\tau)$), and the one on $G$ by $K$. Same convention for other matrices.

In \cite{KOS06} the authors prove that when $N\rightarrow\infty$, $K^{-1}_N$ converge to a matrix $K^{-1}$ which is the inverse of $K$ (regardless of $(\theta,\tau)$ of $K^{-1}_N$). Equation (3.10) holds, and so does (3.11).

By construction of $K_N$, here the first half columns of $K^{-1}$ are indexed by vertices of $G$ and the second half are by those of $G^*$. We are only interested in the first half (primal $OCRSF$), fully described by $(K^{-1})^V$, where $(K^{-1})^V$ is the submatrix of the infinite matrix $K^{-1}$ whose columns are indexed by the vertices of $V$. We may write $K^{-1}=\left((K^{-1})^V\ (K^{-1})^{V^*} \right)$ and then by verifying the block product version of (3.11) we have $(K^{-1})^V\Delta^{\Phi}=d$.

Fixing a row in this equation means fixing some edge $e$ (\textit{i.e.} choosing a white vertex). Denote by $(K^{-1})^V_e$ and $d_e$ the corresponding row vectors of $(K^{-1})^V$ and $d$.

Theorem 3.2 below gives a description of $(K^{-1})^V$ by a statement of existence and uniqueness. A similar argument can be found in \cite{BdT08}.

A vertex $u$ of $G^d$ can be written in the form $(x,y;v)$, where $(x,y)\in\mathbb{Z}^2$, $v\in\mathcal{G}^d_{1}$. Define $\mathcal{C}_0(\mathbb{Z}^2)$ as the space of $\mathcal{G}^d_{1}$-vector-valued functions decaying at infinity, and define $\mathcal{C}_0^B(\mathbb{Z}^2)$ as its (magnetic field $B$) modified version:
$$\mathcal{C}_0^B(\mathbb{Z}^2):=\{f:\mathbb{Z}^2\rightarrow\mathcal{G}^d_{1}: e^{xB_y+yB_x}f(x,y;v)\in \mathcal{C}_0(\mathbb{Z}^2)\}.$$

\begin{theorem}
The matrix $(K^{-1})^V$ is the unique infinite matrix $A$ such that every row $A_e\in \mathcal{C}_0^B(\mathbb{Z}^2)$ and $A\Delta^{\Phi}=d$.
\end{theorem}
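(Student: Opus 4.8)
The plan is to establish existence and uniqueness separately. Existence is already in hand: the matrix $(K^{-1})^V$ is the $V$-indexed block of $K^{-1}=\lim_{N\to\infty}K^{-1}_N$, and the block-product identity $(K^{-1})^V\Delta^{\Phi}=d$ was derived above from equation (3.11). The decay statement $(K^{-1})^V_e\in\mathcal{C}_0^B(\mathbb{Z}^2)$ is precisely the content of the convergence result of \cite{KOS06}: the entries of $K^{-1}$ decay at infinity once one corrects for the magnetic field by the gauge factor $e^{xB_y+yB_x}$, equivalently one works in the gauge where $z=e^{NB_x}$, $w=e^{NB_y}$ are moved onto the boundary paths. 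So the work is concentrated in uniqueness.

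For uniqueness, suppose $A$ and $A'$ both satisfy $A\Delta^{\Phi}=d=A'\Delta^{\Phi}$ with every row in $\mathcal{C}_0^B(\mathbb{Z}^2)$; set $B:=A-A'$, so each row $B_e\in\mathcal{C}_0^B(\mathbb{Z}^2)$ and $B\Delta^{\Phi}=0$. Fix a white vertex $e$ and let $g:=B_e^{\top}$, viewed as a function on the vertices of $G$; then $g$ satisfies $\Delta^{\Phi *}g=0$, i.e. $g$ is in the kernel of the transpose Laplacian with connection, and $e^{xB_y+yB_x}g\in\mathcal{C}_0(\mathbb{Z}^2)$. The goal is to conclude $g\equiv 0$. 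The natural route is to pass to Fourier space: the $\mathbb{Z}^2$-periodicity of $\Delta^{\Phi}$ (after absorbing $B$ into the connection it is a periodic operator with symbol a matrix-valued Laurent polynomial, essentially a block of $K(z,w)$ evaluated along $|z|=e^{-B_x}$, $|w|=e^{-B_y}$) turns the equation into $\widehat{\Delta}(z,w)\,\widehat g(z,w)=0$ on the torus $\{|z|=e^{-B_x},|w|=e^{-B_y}\}$. A Fourier-coefficient-decay function corresponds to a function analytic in a neighbourhood of this torus; since $\det\widehat{\Delta}(z,w)=P_{dimer}(z,w)$ (Proposition 3.1) is not identically zero and has only isolated zeros, $\widehat g$ must vanish off a codimension-one set and hence, by analyticity, everywhere, giving $g\equiv 0$.

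The main obstacle is making the Fourier/analyticity argument clean in the presence of the magnetic field and of the spectral curve: one must be careful that the relevant torus $\{|z|=e^{-B_x},|w|=e^{-B_y}\}$ either avoids the zero set of $P_{dimer}$ or meets it only in the (at most two) real nodes, and that ``row in $\mathcal{C}_0^B$'' genuinely forces the transformed function to be analytic on an annular neighbourhood rather than merely $L^2$. In the gaseous/liquid regime the curve does not meet the torus and the argument is immediate; at a conical point or on the amoeba boundary one needs the mild extra input that a single point (or a one-dimensional intersection) cannot support a nonzero distribution whose inverse transform decays, which is where one invokes that $d$ itself — hence the difference $B$ — has summable rows. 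An alternative, avoiding Fourier analysis, is a direct maximum-principle / random-walk argument: interpret $\Delta^{\Phi *}g=0$ as a harmonicity condition for the connection Laplacian and use that the monodromies along the two generating loops are $e^{\pm NB_x}$, $e^{\pm NB_y}\neq 1$, so the associated (sub-Markovian, killed) walk forces any decaying solution to be zero; I would present the Fourier argument as the main line and mention this as a remark, since it parallels the treatment in \cite{BdT08}.
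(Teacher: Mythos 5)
Your overall strategy---existence via the block identity $(K^{-1})^V\Delta^{\Phi}=d$ extracted from (3.11) plus a decay estimate quoted from the literature, and uniqueness by Fourier analysis of a row of the difference of two solutions---is the same as the paper's. But the uniqueness step as you present it has a genuine gap. A row in $\mathcal{C}_0^B(\mathbb{Z}^2)$ has, after the gauge correction $g\mapsto e^{xB_y+yB_x}g$, Fourier coefficients that merely tend to zero; this does \emph{not} make $\widehat g$ analytic on a neighbourhood of the torus (analyticity would require exponential decay of the coefficients), nor even a function---it is only a tempered distribution on $\mathbb{T}^2$. So the sentence ``$\widehat g$ must vanish off a codimension-one set and hence, by analyticity, everywhere'' does not go through, and it cannot be repaired by invoking summability of the rows of $d$: the difference of two solutions only inherits the $\mathcal{C}_0^B$ hypothesis, not summability.

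The correct way to close this, and what the paper does, is distributional. First gauge away the magnetic field completely: the identity $\widehat{f\Delta^{\Phi}}^B=\widehat{g_f\Delta}$ with $g_f=e^{xB_y+yB_x}f$ reduces everything to the \emph{trivial-connection} Laplacian on the \emph{unit} torus, so one never has to discuss whether the torus $\{|z|=e^{-B_x},\ |w|=e^{-B_y}\}$ meets the spectral curve. Then, pairing $\widehat{g_f}$ against test functions of the form $\widehat{\Delta}^{-1}\psi$ with $\psi$ supported away from the unique zero $(1,1)$ of $\det\widehat{\Delta}$, one concludes $\mathrm{supp}\,\widehat{g_f}\subset\{(1,1)\}$. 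A distribution supported at a single point is a finite combination of derivatives of $\delta_{(1,1)}$, whose inverse Fourier transform is a polynomial in $(x,y)$; the only polynomial in $\mathcal{C}_0(\mathbb{Z}^2)$ is $0$. This is exactly the ``mild extra input'' you flag at the end of your proposal, but it is the whole content of the uniqueness proof rather than a side remark, and it rests on the point-support/polynomial argument, not on analyticity. Your existence paragraph is essentially in line with the paper, though the decay of the rows of $(K^{-1})^V$ is not a one-line consequence of \cite{KOS06}: the paper derives it from $\widehat{g_{(K^{-1})^V_e}\Delta}=\widehat{g_{d_e}}$ together with the characterization of the zeros of $\det K(z,w)$ on $\mathbb{T}^2$ given by Proposition 3.1, following Proposition 5 of \cite{BdT08}.
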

\begin{proof} To prove the uniqueness here we use Fourier transform. Following \cite{BdT08}, the space of rapidly decaying $\mathcal{G}^d_{1}$-vector-valued functions is
$$\mathcal{S}(\mathbb{Z}^2):=\{f:\mathbb{Z}^2\rightarrow\mathcal{G}^d_{1}: \forall(m,n)\in\mathbb{Z}^2, \lim_{\|(x,y)\|\rightarrow\infty}\|x^my^nf(x,y)\|=0\},$$
and its $B$-modified version:
$$\mathcal{S}^B(\mathbb{Z}^2):=\{f:\mathbb{Z}^2\rightarrow\mathcal{G}^d_{1}: e^{xB_y+yB_x}f(x,y;v)\in \mathcal{S}(\mathbb{Z}^2)\}.$$
Also denote by $\mathcal{S}(\mathbb{T}^2)$ the space of $\mathcal{G}^d_{1}$-vector-valued smooth function on the torus.

The Fourier transform of a $\mathcal{G}^d_{1}$-vector-valued function $f$, when exists, is
$$\widehat{f}(z,w)=\sum_{(x,y)\in\mathbb{Z}^2}f(x,y) w^x z^y,\ (z,w)\in\mathbb{T}^2,$$
and we define the Fourier transform with magnetic field $B$ as
$$\widehat{f}^B(z,w)=\sum_{(x,y)\in\mathbb{Z}^2}f(x,y) (we^{B_y})^x (ze^{B_x})^y,\ (z,w)\in\mathbb{T}^2.$$
The fourier transform gives a bijection between $\mathcal{S}(\mathbb{Z}^2)$ and $\mathcal{S}(\mathbb{T}^2)$. Denote by $\langle\ ,\ \rangle_{\mathbb{Z}^2}$ (resp. $\langle\ ,\ \rangle_{\mathbb{T}^2}$) the duality bracket between between $\mathcal{S}(\mathbb{Z}^2)$ and its dual $\mathcal{S'}(\mathbb{Z}^2)$ (resp. between $\mathcal{S}(\mathbb{T}^2)$ and its dual $\mathcal{S'}(\mathbb{T}^2)$). The Fourier transform extends as a bijection from $\mathcal{S}'(\mathbb{Z}^2)$ to $\mathcal{S}'(\mathbb{T}^2)$ by duality.

The Laplacian acting on the right side is:
$$f\Delta^{\Phi}(u)=\sum_{u'\sim u}c_{u'u}[f(u)-\phi_{uu'}f(u')],$$
where the parallel transport is
$$\phi_{u,u'}=\phi_{(x,y;v)(x',y';v')}=e^{B_y(x'-x)+B_x(y'-y)}.$$

Thus,
\begin{align*}
\widehat{f\Delta^{\Phi}}^B &=\sum_{(x,y)\in\mathbb{Z}^2}(we^{B_y})^x(ze^{B_x})^y\sum_{u'\sim u}c_{u'u}[f(u)-e^{B_y(x'-x)+B_x(y'-y)}f(u')]\\
&=\sum_{(x,y)\in\mathbb{Z}^2}w^xz^y\sum_{u'\sim u}c_{u'u}[e^{xB_y+yB_x}f(u)-e^{x'B_y+y'B_x}f(u')]\\
&=\widehat{g_f\Delta},
\end{align*}
where $g_f(x,y,v)=e^{xB_y+yB_x}f(x,y,v)$ and $\Delta$ is the Laplacian with trivial connection. By definition we see that when $f\in \mathcal{C}_0^B(\mathbb{Z}^2)$, then $g_f\in \mathcal{C}_0(\mathbb{Z}^2)$, and the action of $\Delta$ preserves the space $\mathcal{C}_0(\mathbb{Z}^2)$.

To prove the uniqueness it suffices to show that the only solution of $f\Delta^{\Phi}=0$ in $\mathcal{C}_0^B(\mathbb{Z}^2)$ is $0$. Then its Fourier transform with $B$, which is equal to $\widehat{g_f\Delta}$, is also $0$. Their Fourier transforms (with or without $B$) are well defined, and for any test function $h\in \mathcal{S}(\mathbb{T}^2)$,
\begin{align*}
0 &= \langle \widehat{g_f\Delta},h\rangle_{\mathbb{T}^2}\\
&= \langle g_f\Delta,\check{h}\rangle_{\mathbb{Z}^2}\\
&= \langle g_f, \Delta\check{h}\rangle_{\mathbb{Z}^2}\\
&= \langle \widehat{g_f}, \widehat{\Delta}h \rangle_{\mathbb{T}^2}.
\end{align*}

The second line and fourth line are by Parseval's theorem, the fourth line is also by the fact that $\Delta$ acts on $\check{h}$ as a convolution rather than a product. The third line is well defined as $g_f\in\mathcal{C}_0(\mathbb{Z}^2)\subset\mathcal{S}'(\mathbb{Z}^2)$, and $\widehat{g_f}$ in the forth line is in $\mathcal{S}'(\mathbb{T}^2)$ defined by duality. Since $\widehat{\Delta}$ is invertible except at $(1,1)$, the above calculations show that $\forall\psi\in\mathcal{S}(\mathbb{T}^2)$ s.t. $\hat{\psi}$ has support contained in $\mathbb{T}^2\backslash\{(1,1)\}$, let $h$ be $\widehat{\Delta}^{-1}\psi\in\mathcal{S}(\mathbb{T}^2)$, so the support of $\widehat{g_f}$ is contained in $\{(1,1)\}$. For $g_f\in\mathcal{C}_0(\mathbb{Z}^2)$, the only possibility is $g_f=0$, so $f=0$.

To prove the existence, knowing that $(K^{-1})^V$ exists and verifies $(K^{-1})^V\Delta^{\Phi}=d$, we should also prove that every row $(K^{-1})^V_e$ is in the space $\mathcal{C}_0^B(\mathbb{Z}^2)$. By definition it is equivalent to proving that $g_{(K^{-1})^V_e}\in\mathcal{C}_0(\mathbb{Z}^2)$, and we have
$$\widehat{g_{(K^{-1})^V_e}\Delta}=\widehat{(K^{-1})^V_e\Delta^{\Phi}}^B=\widehat{d_e}^B=\widehat{g_{d_e}}.$$
In the last term $g_{d_e}=e^{B_yx+B_xy}d^0_e$, $d^0_e$ is the matrix form of $d$ where the magnetic field
$B$ is $0$. Thus, in the case that $e$ satisfies $x(e)=y(e)=0$, $(K^{-1})^V_e\in \mathcal{C}_0(\mathbb{Z}^2)$ by the same proof as in Proposition 5 of \cite{BdT08}, where the crucial fact is that Proposition 3.1 gives a characterization of the zeros of $\det K(z,w)$ on the torus $\mathbb{T}^2$. By translation invariance it is true for all $e$.

\qed
\end{proof}

\subsection{Measures on $\mathcal{G}_N$ and $G$}

A dimer probability measure on the $\mathbb{Z}^2$-periodic planar graph $G^d$ can be obtained as a limit of Boltzmann probability measures on $\mathcal{G}^d_N$ when $N$ goes to infinity. In \cite{KOS06}, the authors prove that the limiting measure $\mu$ is a determinantal process with kernel $K^{-1}$.

For all $N\in\mathbb{N}$, Temperley's bijection gives a probability measure on $OCRSF$ of $\mathcal{G}_N$. The results of \cite{KOS06} and Temperley's bijection directly imply that, when $N\rightarrow\infty$, $OCRSF$ measures also converge weakly to a limiting Gibbs measure $\mu$ (we use the same letter since they are the same measure) on the configurations of the directed edges of the $\mathbb{Z}^2$-periodic planar graph $G$. This measure is a determinantal process with kernel $(K^{-1})^V$, which is characterized by Theorem 3.2.

Now we give a brief discussion on the measures on non-oriented edges. As mentioned in the introduction, the main interest of this paper is the topology of the configurations under the limiting measure. The similarity of the non-oriented-edge measure and the spanning-tree measure inspires our Section 5.

Repeat the result of \cite{KOS06} in the language of $OCRSF$. Oriented edges $\vec{e}_i$ in $OCRSF$ form a determinantal process, and
\begin{eqnarray}
\mathcal{P}(\vec{e}_1,...,\vec{e}_m)=\big(\prod_i K(\vec{e}_i)\big)\det(K^{-1})_{\vec{E}},
\end{eqnarray}
where $\vec{E}$ is the set of oriented edges $\{\vec{e}_1,...,\vec{e}_m\}$, $K(\vec{e}_i)$ is equal to $\pm c(\vec{e}_i)\phi_{\vec{e}}$ where its sign depends on the orientation and $\phi_{\vec{e}}$ is the parallel transport along $\vec{e}$. We write $\vec{e_i}=v_i v'_i$. Rows of $(K^{-1})_{\vec{E}}$ are indexed by edges of $\vec{E}$ and columns are indexed by $\{ v_i \}$, the starting points of the oriented edges $\{\vec{e_i}\}$.

We suppose that edges of $\vec{E}$ have no common edges and no common starting points, otherwise this is automatically $0$. By entering $K(\vec{e}_i)$ into columns, we can rewrite the term on the right hand side of (3.12) as a single determinant. The $(e_i,v_j)^{th}$ element of this matrix is
$$K(\vec{e}_j)\left( K^{-1}_{e_iv_j} \right).$$

Consider the probability of non-directed edges $E$, which is a binomial sum over directed edges. Denote the reverse of $\vec{e}$ by $\check{\vec{e}}$. Such probability measure is a determinantal process with an edge-edge matrix kernel whose $(e_i,e_j)^{th}$ element is
\begin{eqnarray}
K(\vec{e}_j)\left( K^{-1}_{e_iv_j} \right)+
K(\check{\vec{e}}_j)\left( K^{-1}_{e_iv_j'} \right).
\end{eqnarray}

We note that when the magnetic field is $0$ and the weights of edges are all equal to $1$, formally (3.11) says that $K^{-1}$ is the difference of two Green's functions $g$, and we can rewrite (3.13) as
$$\left(g(v_i',v_j)-g(v_i,v_j)\right)-
\left(g(v_i',v_j')-g(v_i,v_j')\right)
,$$
although only differences of $g$ make sense.

This result is the same as the probability measure on uniform spanning trees on the infinite equal-weighted $\mathbb{Z}^2$-lattice studied in \cite{BP}. Thus for $G=\mathbb{Z}^2$ with equal weight-setting, $OCRSF$ of $\mathcal{G}_N$ converges weakly to spanning trees of $\mathbb{Z}^2$.

\section{Non-zero slope}

In the previous section, by comparing to the results of \cite{BP}, we showed that in the simplest case (square lattice whose edges are equally weighted), under the limiting measure there is a.s. exactly one connected component. A natural question is about the number of connected components in more general cases. In this section, we will prove that when the slope of the limiting measure is non-zero, then there are a.s. infinitely many connected components, and in the next section we prove that in some setting, zero slope means one connected component.

Consider a $\mathbb{Z}^2$-periodic planar graph $G$. We may add a magnetic field on its double graph $G_d$. Following \cite{Pem}, for two given vertices $v_1$ and $v_2$ in $G$, and for any $N\in\mathbb{N}$ such that $\mathcal{G}_N$ contains $v_1$ and $v_2$, we consider the event in $\mathcal{G}_N$ that $v_1$ and $v_2$ are connected within a ball $B_L$ by $OCRSF$ of the toroidal graph $\mathcal{G}_N$ (we ask that the size $N$ of torus is larger than the diameter of $B_L$ so that $B_L$ doesn't superpose with itself). The probability that, under the limiting measure $\mu$, $v_1$ and $v_2$ are connected is equal to:
$$\lim_{L\rightarrow}\lim_{N\rightarrow\infty}\mathbb{P}_{\mathcal{G}_N}[v_1\text{ and }v_2\text{ are connected within }B_L].$$

The measure on $OCRSF$-pairs of the torus gives rise to a measure on their roots (oriented cycles). Proposition 2.1 proves that for any simple close curve $\gamma_{x}$ (resp. $\gamma_y$) that winds once horizontally (resp. vertically), the signed sum of the crossings of the oriented cycles on such a curve is equal to the horizontal (resp. vertical) height change.

\begin{lemma}
For $(F,F^*)$ as an $OCRSF$ pair, we omit the branches and look at the cycles. Vertices $v_1$ and $v_2$ are not connected within $B_L$ if the absolute value of the signed sum of the number of the cycles passing $\gamma$ between $v_1$ and $v_2$ is not less than two.
\end{lemma}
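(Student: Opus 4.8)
The plan is to argue by contradiction using a topological/homological obstruction. Suppose $v_1$ and $v_2$ \emph{are} connected within the ball $B_L$ by the $OCRSF$ configuration $F$. Let $\gamma$ be a simple closed curve (winding, say, horizontally) that separates $v_1$ from $v_2$ in the sense made precise in the statement, i.e.\ the relevant signed count is the number of cycles of $F\cup F^*$ crossing $\gamma$ between the two vertices. The key geometric fact to exploit is the structure of an $OCRSF$: every connected component is an oriented cycle-rooted tree, so any two vertices in the same component are joined by a path in the \emph{undirected} graph that lies inside that component, and that component carries exactly one non-trivial root-cycle. First I would extract from the hypothesis ``$v_1,v_2$ connected within $B_L$'' a concrete undirected path $P$ from $v_1$ to $v_2$ entirely inside $B_L$; since $B_L$ is a ball, $P$ is contractible and in particular crosses $\gamma$ a net of zero times (algebraically), and more usefully, $P$ never wraps around the torus.

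Next I would analyze how the root-cycles that cross $\gamma$ ``between $v_1$ and $v_2$'' relate to $P$. The crucial claim is that each component of $F$ (resp.\ $F^*$) contributes its root-cycle's crossing of $\gamma$ with a sign, and that if $v_1$ and $v_2$ lie in the \emph{same} component then the cycles separating them, counted with sign, can contribute at most $\pm1$: walking along $P$ inside a single component, the branches flow toward the unique root-cycle, so one passes from ``the $v_1$-side'' to ``the $v_2$-side'' of $\gamma$ at most once net, through at most one cycle — namely possibly the root-cycle of the common component itself, and no others can be strictly between them in the relevant sense. Concretely, I would set up a well-defined ``side function'' on the complement of the cycles in the universal cover (or in a fundamental domain large enough to contain $B_L$, using that $N$ exceeds the diameter of $B_L$), show it changes by $\pm1$ exactly when $P$ traverses a cycle crossing $\gamma$, and conclude that the signed sum of such traversals equals the difference of side-values at $v_1$ and $v_2$, which has absolute value at most one if they are in the same component. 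Contrapositively: if that absolute value is $\ge 2$, they cannot be connected within $B_L$.

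I would phrase the bookkeeping using the height-function language already set up: by Proposition 2.1 and the discussion preceding Lemma~4.1, the signed count of cycle-crossings of a winding curve equals a height change, and the height difference between $v_1$ and $v_2$ along the undirected path $P$ is controlled by the winding of $P$ together with the $\pm\pi$ ``jump/join'' corrections from Proposition 2.1's proof; since $P$ stays in a ball and does not wind, the only contributions to the relevant signed sum come from crossing at most one root-cycle. This lets me reuse the geometric figures (Jump/Join) rather than redo the local analysis.

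The main obstacle I anticipate is making rigorous the notion of ``cycles passing $\gamma$ \emph{between} $v_1$ and $v_2$'' and the associated ``side function'': the root-cycles of $F$ need not individually separate the torus (a single $(m,n)$-cycle with $\gcd(m,n)=1$ is non-separating), so one must work with the collection of parallel cycles together, or pass to an appropriate cover, and carefully track which cycles lie in the region of interest. Once that combinatorial-topological setup is fixed — essentially a linear-ordering of the parallel root-cycles crossing $\gamma$, with $v_1$ and $v_2$ assigned positions in this order — the parity/sign argument that ``same component $\Rightarrow$ positions differ by at most one'' should follow from the flow structure of the $OCRT$ containing them, and the contrapositive gives the lemma.
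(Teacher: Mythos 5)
Your proposal is correct in substance but takes the contrapositive, more structural route, whereas the paper's proof is a two-sentence direct argument. The paper's mechanism is: (i) a dual root-cycle is an absolute barrier --- the primal forest never crosses $F^*$, so if a dual cycle passes between $v_1$ and $v_2$ they cannot be joined inside the simply connected region $B_L$; and (ii) a net crossing count of absolute value at least $2$ forces at least one dual cycle between them, because the primal and dual root-cycles are parallel and alternate (each of the $k$ annuli cut out by the primal cycles contains exactly one dual cycle), so two net crossings either already include a dual one or sandwich one. Your argument instead assumes connectivity, works inside the single $OCRT$ containing both vertices, and shows that at most its own root-cycle can separate their lifts in the universal cover; this is valid and the universal-cover ``side function'' you describe is exactly the right way to make ``between'' and the signed count rigorous (it also silently uses the same alternation fact, since the reason ``no other cycles can lie strictly between them'' is that the common component is sandwiched by two dual cycles). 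What the paper's route buys is economy: it never needs to analyse the component containing $v_1,v_2$, only the separating property of dual cycles, and it avoids the height-function and jump/join machinery from Proposition~2.1 that you invoke --- that detour is unnecessary here, since the lemma is purely a separation statement and the winding corrections play no role. What your route buys is a slightly sharper statement (same component implies the signed sum is $0$ or $\pm1$, not merely $\le 1$ in absolute value after discarding dual barriers), at the cost of the bookkeeping you yourself flag as the main obstacle.
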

\begin{proof}For any simply connected finite region on the torus, if two vertices lie on different side of a dual cycle, then they are not connected within the region. If the signed sum of the cycles passing $\gamma$ between these vertices is not less than two, then there should be at least one dual cycle passing $\gamma$ between these vertices.
\qed
\end{proof}

When $N\rightarrow\infty$, the average height change converges to the slope $(s,t)$ of the Gibbs measure $\mu$, see \cite{KOS06}.

\begin{theorem}
If the slope $(s,t)$ of the limiting dimer measure is non-zero, then under the limiting $OCRSF$ measure there are a.s. infinitely many connected components.
\end{theorem}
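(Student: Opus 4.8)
The plan is to deduce the statement from the fact that, under $\mu$, two far-apart vertices of $G$ are unlikely to lie in the same connected component. Let $C\in\{1,2,\dots\}\cup\{\infty\}$ denote the number of connected components of the limiting $OCRSF$, and suppose for contradiction that $\mathbb{P}_\mu[C=k_0]>0$ for some finite $k_0$. Fix a unit lattice vector $u$ and, for $R$ large, consider the $k_0+1$ vertices $v_0,v_0+Ru,\dots,v_0+k_0Ru$. On $\{C=k_0\}$ the pigeonhole principle forces two of these vertices to lie in the same component, so, using translation invariance of $\mu$,
\[\mathbb{P}_\mu[C=k_0]\ \le\ \sum_{0\le i<j\le k_0}\mathbb{P}_\mu[\,v_0+iRu\leftrightarrow v_0+jRu\,]\ \le\ \binom{k_0+1}{2}\max_{1\le m\le k_0}\mathbb{P}_\mu[\,v_0\leftrightarrow v_0+mRu\,].\]
If we show that $\mathbb{P}_\mu[v_0\leftrightarrow v_0+mRu]\to0$ as $R\to\infty$ for each fixed $m\ge1$ and a suitable $u$, the right-hand side tends to $0$, hence $\mathbb{P}_\mu[C=k_0]=0$; summing over $k_0$ gives $C=\infty$ $\mu$-a.s.

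To bound $\mathbb{P}_\mu[v_1\leftrightarrow v_2]$, recall that the event ``$v_1$ and $v_2$ are connected within $B_L$'' is a cylinder event, so $\mathbb{P}_\mu[v_1\leftrightarrow v_2]=\lim_{L\to\infty}\lim_{N\to\infty}\mathbb{P}_{\mathcal{G}_N}[\,v_1\leftrightarrow v_2\text{ within }B_L\,]$. Fix inside $B_L$ a dual path $\gamma$ from a face near $v_1$ to a face near $v_2$. By Lemma 4.1, $v_1$ and $v_2$ are not connected within $B_L$ whenever the signed number $\mathcal{N}_\gamma$ of root-cycles of $(F,F^*)$ crossing $\gamma$ has $|\mathcal{N}_\gamma|\ge2$, so $\mathbb{P}_{\mathcal{G}_N}[v_1\leftrightarrow v_2\text{ within }B_L]\le\mathbb{P}_{\mathcal{G}_N}[\,|\mathcal{N}_\gamma|\le1\,]$. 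Next I use the geometric identity behind Proposition 2.1: taking the primal path dual to $\gamma$ to have zero winding, $\mathcal{N}_\gamma$ equals a fixed non-zero constant times the normalized dimer height change $\tilde h(v_2)-\tilde h(v_1)$ of the configuration corresponding to $(F,F^*)$. Letting $N\to\infty$ and then $L\to\infty$ yields $\mathbb{P}_\mu[v_1\leftrightarrow v_2]\le\mathbb{P}_\mu[\,|\tilde h(v_2)-\tilde h(v_1)|\le c\,]$ for some fixed $c>0$ depending only on $G$.

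Finally, I show that $\tilde h(v_2)-\tilde h(v_1)$ diverges in $\mu$-probability as $|v_1-v_2|\to\infty$ in a good direction. Since $(s,t)\neq0$, assume $t\neq0$ (the case $s\neq0$ is symmetric), take $u=\hat y$ and $v_1=v_0$, $v_2=v_0+L\hat y$. Write $\tilde h(v_0+L\hat y)-\tilde h(v_0)=\sum_{j=0}^{L-1}\big(\tilde h(v_0+(j+1)\hat y)-\tilde h(v_0+j\hat y)\big)$, a sum of $\hat y$-translates of one bounded local observable whose $\mu$-expectation equals the slope component $t$. As a translation-invariant determinantal process (\cite{KOS06}), $\mu$ is mixing, in particular ergodic under the $\hat y$-shift, so Birkhoff's theorem gives $\frac1L\big(\tilde h(v_0+L\hat y)-\tilde h(v_0)\big)\to t\neq0$ $\mu$-a.s.; hence $|\tilde h(v_0+mR\hat y)-\tilde h(v_0)|\to\infty$ $\mu$-a.s., so $\mathbb{P}_\mu[\,|\tilde h(v_0+mR\hat y)-\tilde h(v_0)|\le c\,]\to0$ as $R\to\infty$ for each fixed $m$. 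Chaining the three steps proves the theorem.

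The step I expect to be the main obstacle is the passage from the torus formulation of Lemma 4.1 to the limiting measure: one must give a clean meaning to the integer ``signed number of root-cycles separating $v_1$ from $v_2$'' on $\mathcal{G}_N$, tie it to a multiple of the dimer height difference using the geometry in the proof of Proposition 2.1, and verify that both this identity and the implication ``separated by $\ge2$ cycles $\Rightarrow$ not connected'' survive the weak limit $N\to\infty$, in which the non-contractible root-cycles degenerate into the bi-infinite ends of the trees. A secondary point is to carry out the ergodic averaging along a coordinate axis on which the relevant component of the slope does not vanish, and to justify ergodicity of $\mu$ under that one-parameter shift, for which the determinantal, mixing property is convenient.
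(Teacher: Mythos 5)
Your proposal is correct in outline and shares the geometric core of the paper's argument (Lemma 4.1 together with the crossing/height identity coming out of the proof of Proposition 2.1), but the probabilistic part is genuinely different. The paper fixes a periodic primal path $\gamma_y$ through $v_1$ and $v_2=v_1+k\hat y$, observes that the signed crossing number is supported on $\{-2kM+1,\dots,2kM-1\}$ and has expectation close to $kt$, and runs a first-moment linear-programming bound to conclude only that $\mathbb{P}[v_1\leftrightarrow v_2]\le p<1$ uniformly in $N$ and $L$; it then upgrades ``probability of at most $k-1$ components is $<1$'' to ``$=0$'' by ergodicity of $\mu$ under the full $\mathbb{Z}^2$ action applied to the translation-invariant event. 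You instead prove the stronger statement that $\mathbb{P}_\mu[v_0\leftrightarrow v_0+L\hat y]\to0$, by rewriting the height difference as a Birkhoff sum of bounded local observables with mean $t\ne0$, and then finish by pigeonhole, with no appeal to ergodicity of the component-count event. Your route buys a cleaner and quantitatively stronger conclusion (decay of two-point connection probabilities); the paper's route needs only the deterministic support bound $|\mathcal{N}_\gamma|\le 2kM-1$ and the identity between crossings and height change in expectation, whereas you need that identity pointwise (you correctly flag this as the delicate step; it does hold, since the configuration-dependent path of Proposition 2.1 and your fixed path are homologous rel endpoints, so their signed intersection numbers with the disjoint root-cycles agree, and the winding of the periodic path vanishes).

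The one place where you assert something materially stronger than what the paper uses is ergodicity of $\mu$ under the one-parameter subgroup generated by $\hat y$. Ergodicity under the full $\mathbb{Z}^2$ action (which is what \cite{KOS06} provides and what the paper invokes) does \emph{not} in general imply ergodicity under a rank-one subgroup, so Birkhoff's theorem a priori only gives $\frac1L(\tilde h(v_0+L\hat y)-\tilde h(v_0))\to\mathbb{E}[\,\cdot\mid\mathcal{I}_{\hat y}]$, which could vanish on a set of positive measure. Your parenthetical appeal to mixing of the determinantal process does close this gap in the liquid and gaseous phases, where $K^{-1}(x,y)\to0$ as $|x-y|\to\infty$ forces factorization of cylinder correlations along any sequence of translations tending to infinity; but in a frozen phase the kernel need not decay and mixing is not available, and there you would instead invoke the fact that the relevant height differences are deterministic (equal to $Lt+O(1)$), which gives your conclusion directly. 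Alternatively, the whole ergodic-theorem step can be replaced by Chebyshev's inequality using the variance bounds of \cite{KOS06} ($\mathrm{Var}(\tilde h(v_0+L\hat y)-\tilde h(v_0))=O(\log L)$ in the liquid phase, $O(1)$ otherwise), which covers all phases uniformly. With either of these repairs spelled out, your proof is complete.
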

\begin{proof}We suppose that $t>0$. On $G$ we choose $v_1$ arbitrarily and choose $v_2$ being a copy of $v_1$ laying $k$ units upper ($v_2=v_1+k\hat{y}$).

We choose $\gamma_y$ as a periodic path on $G$ (so on the path there are only the white vertices and the primal black vertices of $G^d$), winding vertically and passing $v_1$ and $v_2$.  Let $M-1$ be the number of the black vertices on $\gamma_y$ between $v_1$ and $v_1+\hat{y}$, so there are $kM-1$ black vertices and $kM$ white vertices on $\gamma_y$ between $v_1$ and $v_2$.

For any ball $B_L$ that contains $v_1$ and $v_2$, and for any $N$ large enough such that $B_L$ is contained in $G_N$ as a simply connected set, Lemma 4.1 says that the probability that $v_1$ and $v_2$ are not connected are bounded from below by the probability that the signed sum of the cycles passing through $\gamma$ between them are strictly bigger than $1$ or strictly less than $-1$.

For any $N$, and any $OCRSF$ of $\mathcal{G}_N$, the total height change along $y$-axis $h_y$ is the total signed sum of the primal cycles and dual cycles that pass through the once-vertically winding curve $\gamma_y$. So for $v_1$ and $v_2$, the expected height difference $h_y(v_1,v_2)$ is the expected signed sum of the number of crossings of the cycles between $v_1$ and $v_2$.

As $N$ goes to infinity, $\mathbb{E}[h_y(v_1,v_2)]\rightarrow kt$. So $\forall\alpha>0$ small, when $N$ is large, we have $\mathbb{E}[h_y(v_1,v_2)]\geq kt-\alpha$. As there are at most $2kM-1$ vacancies that allow the cycles to pass through, the expectation of the signed sum can be written as:
$$p_{-2kM+1}(-2kM+1)+...+p_{-1}(-1)+p_1+...+p_{2kM-1}(2kM-1),$$
where $p_i$ is the probability that the signed sum is $i$. And if $|i|\geq 2$, then there are at least two cycles passing between $v_1$ and $v_2$.

In hypotheses we suppose that $t$ is positive. We want to maximize $p_{-1}+p_{0}+p_{1}$ under the constraints $$\sum_{i=-2kM+1}^{2kM-1}p_{i}i\geq kt-\alpha,\ \sum_{i=-2kM+1}^{2kM-1}p_{i}=1.$$

If $p_{-1}+p_{0}+p_{1}$ is equal to $p$, then their contribution to the expectation is at most $p$, and the remaining terms contribute at most $(1-p)(2kM-1)$. So we have
$$p+(1-p)(2kM-1)\geq kt-\alpha,$$
which turns to be
$$p(2kM-2)\leq 2kM-kt-1+\alpha.$$

Choose $k$ bigger than $1/M$ and $1/t$, $\alpha$ sufficiently small, then $p<\frac{2kM-1-kt+\alpha}{2kM-2}$, which is less than $1$, and this is bigger than the probability that $v_1$ and $v_2$ is not connected. Especially, we remark that this upper bound when $N$ is large enough doesn't depend on $L$. As $L$ tends to infinity, the probability that $v_1$ and $v_2$ are connected is less than $1$, so the probability that there is a unique connected component is less than $1$.

By the same method we can generalize this result to finite vertices, saying that there exist $v_1,...v_k\in V$, the probability that any two of them is connected is less than $1$, so the probability that there are at most $k-1$ connected components is less than $1$.

Since the measure is ergodic, and whether there are at most $k-1$ components is a translation-invariant event, the probability that there are at most $k-1$ connected components is $0$, and we prove the theorem.
\qed
\end{proof}

\section{Zero slope}
The following lemma is an important observation for a graph $G^d$ arising form its primal graph $G$.

\begin{lemma}
In the phase diagram of the dimer measure of $G^d$, the point $B=(0,0)$ always corresponds to a zero slope.
\end{lemma}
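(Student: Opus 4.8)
The plan is to show that, already on each finite torus $\mathcal{G}^d_N$, the Boltzmann dimer measure at $B=(0,0)$ has vanishing expected height change, and then pass to the limit. Concretely, I would exhibit a weight-preserving involution $\iota$ on the set of dimer configurations of $\mathcal{G}^d_N$ satisfying $(h^{\iota M}_x,h^{\iota M}_y)=-(h^M_x,h^M_y)$ for the height change around the torus. Such an $\iota$ forces the expected height change around $\mathcal{G}^d_N$ to be $(0,0)$ for every $N$, hence, by translation invariance, the expected height increment over one fundamental domain to vanish as well; since this increment is a bounded local observable and the finite-torus measures converge weakly to the Gibbs measure $\mu$ (see \cite{KOS06} and Section 4), the slope $(s,t)$ of $\mu$ is the limit of these averaged increments and must therefore be $(0,0)$.

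I would build $\iota$ on the $OCRSF$ side of Temperley's bijection. A configuration $M$ of $\mathcal{G}^d_N$ corresponds to an $OCRSF$ pair $(F,F^*)$ of $\mathcal{G}_N$ and $\mathcal{G}^*_N$; define $\iota$ to reverse the orientation of every root-cycle of $F$ and of $F^*$, leaving all branches untouched. One checks that this is well defined: reversing a root-cycle of an $OCRT$ still leaves each vertex with a unique outgoing edge and each branch directed towards the cycle, and since no edge is added or removed $F$ and $F^*$ still do not cross; so $\iota$ maps $\mathcal{F}(\mathcal{G}_N,\mathcal{G}^*_N)$ to itself and is manifestly an involution. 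It is weight-preserving at $B=(0,0)$: the weight of $(F,F^*)$ is $\prod_{\vec e\in F}c(\vec e)$ because dual edges carry weight $1$, and $\iota$ changes nothing on the branches, reverses dual cycle-edges (still weight $1$), and replaces each directed primal cycle-edge $\vec{e}$ by $\check{\vec{e}}$ — so, the primal weighting being undirected, the weight is unchanged. Finally, $\iota$ reverses the height change: by Proposition 2.1, whose proof applies verbatim on $\mathcal{G}_N$, one has $h^M_x=-n(k-k_1-k_2)$ and $h^M_y=m(k-k_1-k_2)$, and reversing every root-cycle sends its homology class to its opposite, so $k_1\mapsto k-k_1$, $k_2\mapsto k-k_2$ and $k-k_1-k_2\mapsto-(k-k_1-k_2)$.

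The step I expect to require the most care is precisely this last one: one has to be certain that flipping the orientation of the root-cycles, and of nothing else, negates exactly the combination $k-k_1-k_2$ that Proposition 2.1 attaches to the height change; the rest is bookkeeping. (One caveat worth stating: the weight-preservation of $\iota$ uses that the weighting of $G$ is undirected, which is the situation in which the lemma is applied — for a genuinely directed weighting the monodromy around a non-contractible cycle acts like a nonzero component of $B$ and should be gauged into $B$ beforehand.) A non-combinatorial alternative is to read off from (3.9), again via the cycle-reversal bijection on $OCRSF$s, the identity $P_{dimer}(z^{-1},w^{-1})=P_{dimer}(z,w)$; the free energy $B\mapsto\frac{1}{(2\pi)^2}\int_{\mathbb{T}^2}\log|P_{dimer}(e^{B_x}z,e^{B_y}w)|\,\frac{dz}{iz}\frac{dw}{iw}$ is then even in $B$, and the slope, being its gradient, vanishes at $B=(0,0)$.
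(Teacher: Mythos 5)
There is a genuine gap: your involution is not weight-preserving in the setting where the lemma actually matters. The paper works throughout with \emph{directed} weights on $G$ (Section 2.1: $c(uv)$ is in general different from $c(vu)$; the running example is the drifted square grid with $a\neq c$ or $b\neq d$). Reversing a primal root-cycle $v_1\to v_2\to\cdots\to v_\ell\to v_1$ multiplies the weight of the configuration by $\prod_i c(v_{i+1}v_i)/c(v_iv_{i+1})$, i.e.\ by the monodromy of the weight ratios around a non-contractible cycle, which is $\neq 1$ for a genuinely drifted weighting (for the drifted square grid a horizontal root-cycle on $\mathcal{G}_N$ picks up a factor $(d/b)^{N}$, up to relabeling of directions). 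So $\iota$ preserves the measure only for symmetric weights, in which case the lemma is essentially immediate anyway. Your caveat notices this but then asserts that the lemma is only applied to undirected weightings; that is not so: the entire content of the lemma, and of the phase diagram of Figure 1, is that even for a drifted weighting, whose amoeba is \emph{not} centered at the origin and whose free energy is \emph{not} even in $B$, the point $B=(0,0)$ still lies in the region of slope $(0,0)$. Your alternative route fails for the same reason: for the drifted square grid, (3.9) gives $P(z,w)=(a+b+c+d)-aw-cw^{-1}-bz-dz^{-1}$ (again up to relabeling), and $P(z^{-1},w^{-1})\neq P(z,w)$ unless $a=c$ and $b=d$. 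Gauging the monodromy into the magnetic field does not rescue the argument either: it transports you to a symmetric weighting evaluated at some $B_0\neq(0,0)$, where the involution no longer preserves the Boltzmann weights because of the factors $z^{-h_x}w^{-h_y}$.

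The idea you are missing is the one the paper uses. At $B=(0,0)$ the characteristic polynomial equals $\det\Delta^{\Phi}$ with trivial connection, which vanishes at $(z,w)=(1,1)$ because constant functions are harmonic; hence $B=(0,0)$ always lies in the closed amoeba, where the slope is forced to be an integer point of the Newton polygon. For symmetric weights this point is $(0,0)$ by the central symmetry of $P$; deforming continuously from a symmetric weighting to an arbitrary one, the slope at the fixed point $B=(0,0)$ varies continuously while being constrained to the integer lattice, so it remains $(0,0)$ for every weighting. If you want a finite-volume combinatorial proof in the spirit of your involution, you would have to find a bijection that compensates the cycle monodromies, which is exactly the difficulty the paper's continuity argument circumvents.
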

\begin{proof}
When $B=(0,0)$, $(z,w)=(e^0,e^0)=(1,1)$ is always a real zero of the characteristic polynomial $\det K(z,w)=\det \Delta^{\Phi}$, so either $B=(0,0)$ lies on the boundary of the amoeba (when $(1,1)$ is a single root) or in the interior of the amoeba (when $(1,1)$ is a double root). In either case, it corresponds to an integer point in the Newton's polygon.

If the graph $G$ have a symmetric weight setting (\textit{i.e.} $c(uv)=c(vu)$ for all edges $uv$), then the Laplacian $\Delta^{\Phi}$ is symmetric in $z$ and $z^{-1}$ (resp. in $w$ and $w^{-1}$). Since $\det K(z,w)=\det \Delta^{\Phi}$, the amoeba is symmetric with respect to the origin and so does the Newton's polygon, and $B=(0,0)$ corresponds to $(s,t)=(0,0)$. For this symmetric Laplacian, $(1,1)$ is a double real root, so $B=(0,0)$ lies in the interior of the amoeba (a liquid phase). Also, as an interior integer point, $(s,t)=(0,0)$ corresponds either to a liquid phase or to a gaseous phase.

In general, any weight setting can be obtained from the symmetric weight setting via continuous deformation. Along this deformation, for any fixed magnetic field $B$ the slope $(s,t)$ changes continuously, while the point $B=(0,0)$ always corresponds to an integer point $(s,t)$. Thus $B=(0,0)$ always corresponds to $(s,t)=(0,0)$. This finishes the proof.

\qed
\end{proof}

Note that same slope means same limiting measure. To study the case where the slope is zero, we just need to study the case where the magnetic field is zero. The advantage is that $B=(0,0)$ enables us to approach the limiting Gibbs measure $\mu$ by another sequence of measures $m_N$ but on finite planar graphs. The later one has a random-walk interpretation, which gives some tools to study connectivity.

In the following part we suppose that $B=(0,0)$, and we omit the connection $\Phi$ (which is trivial) of the Laplacian $\Delta^{\Phi}$ to simplify the notation.

Let the finite graphs $(G_N)_N$ form an exhausting sequence of $G$ but with a \textit{wired boundary condition}. A wired boundary is to glue every vertex on the boundary into one. Let $G^*_N$ be its dual and let $G^d_N$ be the double graph. They are both square lattice except vertices near boundary.

For a non-oriented spanning tree of $G_N$, we choose the boundary vertex $r$ as root, denote the tree oriented to $r$ by $T$, and choose an arbitrary vertex in $G^*_N$ incident to $r$, denoted by $r^*$. $T^*$ is dual of $T$ rooted at $r^*$. Such $T$ is called \textit{wired spanning trees} ($WST$), which was implicit in \cite{Pem}, then made explicit in \cite{Hagg95} and further developed in \cite{BLPS}. We denote this weighted wired spanning tree measure by $m_N$.

In \cite{BLPS}, the authors prove the existence of a weak limit measure on $WST$ on non-directed weighted graphs, also called \textit{networks}. Such name is given because of its natural relation to electrical networks.

Graphs arising from dimer models are directed and weighted. We show that the same approach still works.

Recall that for the finite planar graph $G_N$ with wired boundary condition, Temperley's bijection gives a measure preserving bijection between dimer configurations of $G^d_N\setminus\{r,r^*\}$ and spanning-tree-pairs $(T,T^*)$, $T$ rooted at $r$ and $T^*$ rooted at $r^*$, see Section 2.2.1. Here the weight of $T^*$ is always $1$ and every $T$ has only one dual $T^*$, so the spanning-tree-pair measure is the same as $m_N$.

A spanning tree of a finite planar graph is automatically related to a random walk by Wilson's algorithm.

A \textit{random walk} on weighted graph $G$ is a Markov chain $X_0,X_1,X_2,...$ on $G$ that for all $n\in\mathbb{Z}$, $v,u\in G$,
$$\mathbb{P}(X_{n+1}=v|X_{n}=u)=\frac{c(uv)}{\sum_{v'\sim u}c(uv')}.$$

Let $p=(v_0,v_1,...,v_n)$ be a path on $G$. The \textit{loop erasure} of $p$ is a path $LE(p)=(u_0,u_1,...,u_m)$, such that $u_0=v_0$, and conditioned on that $u_j$ is set and $k$ is the largest number such that $v_k=u_j$, then $u_{j+1}=v_{k+1}$.

\textit{Wilson's algorithm} (\cite{Wilson},\cite{BLPS}): for the finite graph $G_N$ and root $r$ chosen as a vertex of $G_N$, the algorithm constructs a growing sequence of trees $\big(T(i)\big)_i$ from $T(0)=r$, and once $\big(T(i)\big)_i$ is generated, we randomly and independently pick a vertex $v$ not in $T(i)$, start an independent random walk $X$ starting at $v$ and end it once it hits $T(i)$. The new tree $T(i+1)$ is defined as $T(i)$ plus the loop erasure of the path of $X$. Continue this process until every vertex is in the tree. The constructed spanning tree has a probability proportional to its weight.

We forget the restriction of staying in $G_N$ and consider random walk in the $\mathbb{Z}^2$-periodic graph $G$. For any $N$, consider $G_N$. Similar to what we have in toroidal case, the oriented-edge-measure of the spanning trees of $G_N$ rooted at $r$ forms a determinantal measure of kernel $(K^{-1}_N)^V$, which is the submatrix of the inverse Kasteleyn matrix of $G^d_N\setminus\{r,r^*\}$ indexed by vertices of $G_N$.

Here the matrix relation (3.7) is $\Delta_N=d^*_N d_N$. We rearrange the columns and rows such that first half of the blacks are indexed by the vertices of $G_N$. Taking away $r$ and $r^*$ corresponds to deleting the corresponding rows and columns in the matrices, the matrices such modified are denoted by symbols with tildes:
$$
\begin{pmatrix}
\widetilde{d}^{*}_N \\
\widetilde{d}^{*}_{dual\ N}
\end{pmatrix}
\begin{pmatrix}
\widetilde{d}_N & \widetilde{d}_{dual\ N}
\end{pmatrix}
=
\begin{pmatrix}
\widetilde{\Delta}_N & \star \\
0 & \widetilde{\Delta}_{dual\ N}
\end{pmatrix}
.
$$

Removing $r$ and $r^*$ leaves $\widetilde{\Delta}_N$ and $\widetilde{\Delta}_{dual\ N}$ invertible. The matrix $\begin{pmatrix}\tilde{d}^*_N \\ \tilde{d}^*_{dual\ N} \end{pmatrix}$ is exactly the Kasteleyn matrix $K_N$. So $(K^{-1}_N)^V$ is the only matrix $A_N$ which satisfies
$$A_N \widetilde{\Delta}_N=\widetilde{d}_N.$$

Let $D$ be a diagonal matrix indexed by $v\in G_N\setminus\{r\}$, and $D_{v,v}=\sum_{v'\sim v}c(vv')$. Then
\begin{equation*}
\left(D^{-1}_N \widetilde{\Delta}_N\right)_{v,v'}=
\begin{cases}
1 & v'=v\\
-c(vv')/\sum_{v''\sim v}c(vv'') & v'\neq v.
\end{cases}
\end{equation*}

When $v'\neq v$, the $(v,v')^{th}$ entry is the transition probability of the random walk from $v$ to $v'$. Write
$$p_{v,v'}=c(vv')/\sum_{v''\sim v}c(vv'').$$

The only matrix $B_N$ satisfying $B_N D^{-1}_N\widetilde{\Delta}_N=\widetilde{d}_N$ is $(K^{-1}_N)^VD_N$. Meanwhile, there is a natural solution of this equation given by Green's function. For white vertex $w$ associated to directed edge $\vec{e}=v_1v_2$ (see Figure 11), and for any vertex $v$, if we define $RW_{a}^{G_N}$ as the random walk on $G_N$ starting at $a$ and killed at boundary $r$, then
\begin{eqnarray}
(B_N)_{w,v}=\mathbb{E}\left[\# RW_{v_2}^{G_N}\text{ visits }v-\# RW_{v_1}^{G_N}\text{ visits }v\right].
\end{eqnarray}

\begin{figure}[H]
\centering
\includegraphics[width=0.22\textwidth]{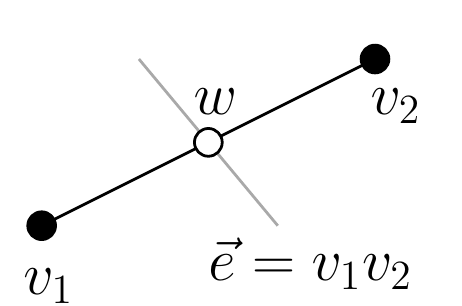}
\caption{$\vec{e}=v_1v_2$.}
\end{figure}

\begin{condition} If the right hand side of (5.14) converges when $N\rightarrow\infty$ and decays to zero when the distance between $w$ and $v$ tends to infinity, we say that the graph $G$ verifies the condition ($\star$).
\end{condition}
\begin{theorem}
When the condition ($\star$) is verified, as $N$ goes to infinity, $m_N$ converges to a measure $m$ on the drifted square grid graph $G$ which is the same as $\mu$, the weak limit of $\mu_N$. The measure $m$ is supported on spanning trees, so is $\mu$.
\end{theorem}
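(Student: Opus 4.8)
\ The statement has three ingredients: weak convergence of the $m_N$, the identification $m=\mu$, and the fact that the common limit lives on spanning trees. The first two are best obtained together at the level of the determinantal kernel. By the discussion preceding Condition 5.2, $m_N$ is the determinantal measure on oriented edges of $G_N$ with kernel $(K_N^{-1})^V=B_N D_N^{-1}$, where $B_N$ has entries given by (5.14) and $D_N$ is the degree matrix, which only sees the local structure of $G$ and is therefore constant in $N$ at any fixed vertex once $N$ is large. Condition ($\star$) says precisely that the entries of $B_N$ converge as $N\to\infty$ to a limit $B_\infty$ whose $(w,v)$-entry tends to $0$ as $d(w,v)\to\infty$; hence $(K_N^{-1})^V$ converges entrywise to $A:=B_\infty D^{-1}$, all of whose rows lie in $\mathcal C_0(\mathbb Z^2)$ (recall we are in the case $B=0$, so $\mathcal C_0^B=\mathcal C_0$ and the connection is trivial). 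Since every cylinder probability of a determinantal process is a finite minor of the kernel, entrywise convergence of kernels forces weak convergence $m_N\to m$, with $m$ determinantal of kernel $A$.

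To see $m=\mu$ I would feed $A$ into Theorem 3.2. Passing the finite-graph identity $(K_N^{-1})^V\widetilde\Delta_N=\widetilde d_N$ to the limit entrywise is legitimate because, for a fixed row $e$ and column $v$, the quantity $\sum_{v'}(K_N^{-1})^V_{e,v'}(\widetilde\Delta_N)_{v',v}$ is a finite sum (bounded degree) and, for $N$ large, $(\widetilde\Delta_N)_{\cdot,v}=\Delta_{\cdot,v}$ and $(\widetilde d_N)_{e,v}=d_{e,v}$, since an interior vertex of $G_N$ carries the same Laplacian row as in $G$; letting $N\to\infty$ yields $A\Delta=d$. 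Together with the decay of the rows of $A$, Theorem 3.2 (with trivial connection) forces $A=(K^{-1})^V$, the kernel of $\mu$, and two determinantal measures with the same kernel coincide, so $m=\mu$. In particular $m_N\to\mu$, and since, for any two vertices $v_1,v_2$, the event that $v_1,v_2$ are connected within $B_L$ is an $m_N$-cylinder event, we get $\mathbb P_\mu[v_1\leftrightarrow v_2]=\lim_{L}\lim_{N}\mathbb P_{m_N}[v_1,v_2 \text{ connected within } B_L]$.

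For the support statement, observe first that each $m_N$ is carried by spanning trees of $G_N$; hence in the weak limit every vertex still has out-degree exactly one (a clopen local condition, and the root $r$ escapes to infinity) and there is a.s.\ no finite directed cycle (a closed condition), so $m$-a.s.\ a configuration is an essential spanning forest. To upgrade this to a single tree I would invoke the directed analogue of Wilson's algorithm rooted at infinity announced just before the theorem (legitimate because the periodic walk on $G$ is either recurrent or transient with a drift): reading off the tree-path between $v_1$ and $v_2$ from two loop-erased walks, one sees that $v_1,v_2$ fail to be connected within every ball only if two independent random walks started at $v_1,v_2$ never meet. On a $\mathbb Z^2$-periodic planar graph this event is null: in $\mathbb Z^2$-coordinates the difference of the two coordinate processes is a mean-zero, finite-variance process on $\mathbb Z^2$, hence recurrent by Chung--Fuchs, so the walks occupy the same lattice translate infinitely often, and an independent, positive-probability matching of the fundamental-domain coordinate makes them coincide infinitely often a.s. Therefore $\lim_{L}\lim_{N}\mathbb P_{m_N}[v_1,v_2 \text{ connected within } B_L]=1$ for all $v_1,v_2$, so $\mu$-a.s.\ all vertices lie in one tree; combined with the forest structure above, $m=\mu$ is supported on spanning trees.

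Everything up to the forest structure is ``soft'': determinant bookkeeping plus the uniqueness already proved in Theorem 3.2. The main obstacle is the connectivity step, which uses two genuine inputs not contained in Condition ($\star$): (i) that the weak limit of wired spanning trees on the exhaustion is indeed sampled by a Wilson-type algorithm rooted at infinity, which must be re-established in the \emph{directed} weighted setting rather than the reversible networks of \cite{BLPS}; and (ii) the two-walk intersection estimate, which is where planarity and the two-dimensionality of the ``drifted square grid'' enter. Making (ii) quantitative for a general $\mathbb Z^2$-periodic planar graph carrying a drift (rather than for $\mathbb Z^2$ itself) requires a local central limit theorem / Green's-function comparison for the periodic walk, and checking that this is compatible with Condition ($\star$) is the delicate point.
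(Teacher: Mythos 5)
Your proposal follows essentially the same route as the paper: condition ($\star$) gives entrywise convergence of the kernels $B_N D_N^{-1}$ to a matrix $A$ satisfying $A\Delta=d$ with rows decaying at infinity, Theorem 3.2 then identifies $A$ with the kernel of $\mu$ so that $m=\mu$, and the spanning-tree support is obtained from Wilson's algorithm together with the a.s.\ infinite intersection of two independent walks (the drift cancels in the difference, leaving a recurrent mean-zero planar walk), exactly as in the paper's Lemma 5.7. The one step you elide is the passage from ``two independent walks meet infinitely often'' to ``a walk meets the \emph{loop erasure} of an independent walk infinitely often'' --- meeting points may be erased, so this is not immediate from Wilson's algorithm; it is precisely Theorem 1.1 of \cite{LPS}, which the paper cites, and with that reference your argument is complete.
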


The condition ($\star$) is true for a big class of graphs. See the following propositions.

\begin{prop}
If the graph $G$ is transient, then the condition ($\star$) is verified.
\end{prop}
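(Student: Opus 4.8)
The plan is to identify the right-hand side of (5.14) as a difference of killed random-walk Green's functions, to obtain its convergence from transience by a monotonicity argument, and to obtain its spatial decay from the fact that a transient $\mathbb{Z}^2$-periodic random walk has a genuine drift. For the first step, fix the white vertex $w$ and let $v_1v_2$ be the directed edge of $G$ carrying it (Figure 11). By (5.14), $(B_N)_{w,v}=g_N(v_2,v)-g_N(v_1,v)$, where $g_N(a,v):=\mathbb{E}_a\big[\#\{n\ge 0:\ X_n=v,\ n<\tau_r\}\big]$ is the Green's function of the random walk $X$ on $G_N$ killed at the wired boundary $r$; equivalently $g_N(a,v)$ counts the visits to $v$ of the random walk on $G$ before it leaves the interior of $G_N$. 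Under this coupling, passing from $G_N$ to $G_{N+1}$ only postpones the killing time, so $N\mapsto g_N(a,v)$ is non-decreasing and $g_N(a,v)\uparrow g(a,v)$ by monotone convergence, where $g(a,v)$ is the Green's function of the random walk on the infinite graph $G$. Transience of $G$ is precisely the statement $g(v,v)<\infty$, and $g(a,v)=\mathbb{P}_a(\tau_v<\infty)\,g(v,v)\le g(v,v)<\infty$ by the strong Markov property, so every entry is finite; hence $(B_N)_{w,v}\to g(v_2,v)-g(v_1,v)$, which is the convergence demanded by ($\star$) and is independent of the chosen exhaustion.

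For the decay in $d(w,v)$, I would first note that a $\mathbb{Z}^2$-periodic random walk with zero drift satisfies an invariance principle with a (possibly degenerate) Brownian limit of dimension $\le 2$ and is therefore recurrent; hence a transient $\mathbb{Z}^2$-periodic walk has a non-zero asymptotic drift $\bar v$. The local central limit theorem for such walks then gives $\mathbb{P}_a(X_n=v)\le C n^{-1}\exp(-c\,|v-a-n\bar v|^2/n)$, with Gaussian tails producing genuine exponential decay once $|v-a-n\bar v|\gg\sqrt n$; summing over $n$ yields $g(a,v)\to 0$ as $d(a,v)\to\infty$ (of order $d(a,v)^{-1/2}$ along the drift ray and exponentially small off it). By $\mathbb{Z}^2$-periodicity it suffices to let $v\to\infty$ with $w$ in a fixed fundamental domain, whence $g(v_1,v),g(v_2,v)\to 0$ and a fortiori $\lim_N(B_N)_{w,v}=g(v_2,v)-g(v_1,v)\to 0$, which is the remaining half of ($\star$).

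The first step is routine once transience is read as finiteness of the Green's function; the substance, and the main obstacle, is the decay. The delicate point is that one must make the Green's function $g(a,v)$ \emph{itself} vanish, not merely the dipole difference $g(v_2,v)-g(v_1,v)$: for a general transient network (such as a biased one-dimensional walk) $g(a,\cdot)$ need not decay at all, and one would then be forced to extract the cancellation by hand from $g(a,v)=\mathbb{P}_a(\tau_v<\infty)g(v,v)$ together with the periodicity of $v\mapsto g(v,v)$. It is precisely the $\mathbb{Z}^2$-periodicity, which forces genuinely two-dimensional drifted behaviour, that excludes this degenerate scenario and makes the clean estimate above available. (Alternatively, by Theorem 3.2 the rows of $(K^{-1})^V$ lie in $\mathcal{C}_0(\mathbb{Z}^2)$; since $B=(K^{-1})^V D$ with $D$ a bounded diagonal matrix, the rows of $B$ decay as well — but this route conceals the probabilistic mechanism.)
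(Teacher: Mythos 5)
Your argument is correct and follows essentially the same route as the paper: you factor the killed Green's function as $g(a,v)=\mathbb{P}_a(\tau_v<\infty)\,g(v,v)$, use transience to bound the diagonal term and obtain convergence in $N$, and use the non-zero drift of a transient $\mathbb{Z}^2$-periodic walk (order-$n$ drift versus order-$\sqrt{n}$ fluctuations) to make the hitting probability, hence the Green's function itself, vanish at large distance. Your additions --- the monotone-convergence justification of $g_N\uparrow g$ and the observation that zero drift would force recurrence in dimension two --- merely make explicit steps the paper leaves implicit.
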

\begin{proof}
We have
$$\mathbb{E}\left[\# RW_{v_2}^{G_N}\text{ visits }v\right]=\mathbb{P}\left(RW_{v_2}^{G_N}\text{ visits }v\right)\mathbb{E}\left[\# RW_{v}^{G_N}\text{ visits }v\right].$$
In a transient $\mathbb{Z}^2$-periodic case, the second factor on the right hand side converges when $N\rightarrow\infty$ and is bounded. The first factor also converges when $N\rightarrow\infty$, and it tends to zero when the distance between $v_2$ and $v$ tends to infinity, since in scaling the random walk $\gamma$ behaves like a drift of order $n$ plus a term of variance $\sqrt{n}$. For a fixed-size ball, the probability that such a path visits it decays to zero as the distance between the ball and the origin tends to infinity.
\qed
\end{proof}

\begin{prop}
If the graph is non-directed, then the condition ($\star$) is verified.
\end{prop}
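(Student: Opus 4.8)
The plan is to read the right-hand side of (5.14) as a difference of Green's functions of a \emph{recurrent} random walk, and to control that difference through the potential kernel. Since $G$ is non-directed, the walk on $G$ is reversible with respect to $\pi(v)=\sum_{u\sim v}c(vu)$, and because $c(uv)=c(vu)$ the mean one-step displacement under $\pi$ vanishes; thus the walk on the $\mathbb{Z}^2$-periodic graph $G$ has zero drift, and being a diffusive walk on a two-dimensional periodic graph it is recurrent. Writing $g_N(a,x)=\mathbb{E}[\#RW^{G_N}_a\text{ visits }x]$ for the Green's function of the walk killed at the wired root $r$, and recalling that $\vec e=v_1v_2$ with $v_1\sim v_2$, the right-hand side of (5.14) is exactly $(B_N)_{w,v}=g_N(v_2,v)-g_N(v_1,v)$, the discrete $\vec e$-gradient of $g_N(\cdot,v)$.

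First I would prove convergence as $N\to\infty$. Recurrence forces each $g_N(a,x)$ to diverge (like a constant times $\log N$), but the divergent part is asymptotically independent of the endpoints and hence cancels in the difference. Concretely, reversibility gives $g_N(v_i,v)=\pi(v)\,\tilde g_N(v,v_i)$ with $\tilde g_N(x,y)=g_N(x,y)/\pi(y)$ symmetric, so
$$(B_N)_{w,v}=\pi(v)\bigl(\tilde g_N(v,v_2)-\tilde g_N(v,v_1)\bigr),$$
a difference in the second argument of the symmetric Green's function. The potential-kernel representation of $\tilde g_N$ reads $\tilde g_N(x,y)=h_N(x)-a_G(x,y)+o(1)$, where $a_G$ is the potential kernel of the periodic walk and $h_N(x)$ does not depend on $y$ (the $o(1)$ being uniform over bounded sets of $y$); subtracting the values at $v_2$ and $v_1$ kills $h_N(v)$ and yields $(B_N)_{w,v}\to\pi(v)\bigl(a_G(v,v_1)-a_G(v,v_2)\bigr)$.

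It then remains to show this limit decays as $\mathrm{dist}(w,v)\to\infty$, equivalently as $\mathrm{dist}(v_1,v)\to\infty$. This is exactly where zero drift is used: the local limit theorem for $\mathbb{Z}^2$-periodic random walks, together with the associated Gaussian heat-kernel bounds, gives $a_G(x,y)=\kappa\log\|x-y\|_Q+\mathrm{const}+o(1)$ for a positive quadratic form $Q$ built from the covariance, whence the discrete gradient satisfies $|a_G(x,y)-a_G(x,y')|=O\bigl(1/\mathrm{dist}(x,y)\bigr)$ whenever $y$ and $y'$ stay at bounded distance. Since $v_1\sim v_2$ and $\pi$ is bounded (being periodic), the limit is $O\bigl(1/\mathrm{dist}(w,v)\bigr)\to0$, so condition $(\star)$ holds. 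The only non-routine ingredient is the potential-kernel asymptotics for a general periodic reversible walk, i.e.\ the local central limit theorem and the ensuing Green's-function estimates; this is the point I expect to require the most care, though it is classical, and as in Proposition 5.4 it is precisely the recurrent/transient dichotomy that selects this argument over the transient one.
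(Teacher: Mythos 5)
Your argument reaches the right conclusion but by a genuinely different route from the paper. The paper's proof of this proposition is a one-line appeal to electrical network theory: a non-directed weighted graph is a network in the sense of \cite{BLPS}, and the convergence and decay of the wired Green's-function differences in (5.14) are imported wholesale from the wired-spanning-forest machinery there. You instead prove the statement by hand: reversibility with respect to $\pi(v)=\sum_{u\sim v}c(vu)$ forces zero drift, hence recurrence of the periodic walk in two dimensions (consistent with Example 5.6, where the non-directed case is exactly $a=c$, $b=d$), and you extract the limit and the decay from potential-kernel asymptotics. This buys an explicit identification of the limit, $\pi(v)\bigl(a_G(v,v_1)-a_G(v,v_2)\bigr)$, and a quantitative decay rate $O\bigl(1/\mathrm{dist}(w,v)\bigr)$, at the price of needing a local CLT for periodic reversible walks. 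One step deserves more care than your write-up gives it: for a periodic but not fully translation-invariant walk, the additive constant in $a_G(x,y)=\kappa\log\|x-y\|_Q+\mathrm{const}+o(1)$ may a priori depend on the positions of $x$ and $y$ inside the fundamental domain; if the $y$-dependence did not cancel, the difference $a_G(v,v_1)-a_G(v,v_2)$ would tend to a nonzero constant rather than to zero. So you genuinely need the refined asymptotics (in terms of corrected or harmonic coordinates, or a direct difference estimate from the LCLT with error control), not merely the leading logarithm plus an unspecified constant; note also that soft arguments do not suffice here, since in a recurrent network constants lie in the wired Dirichlet space, so membership of the dipole potential in that space does not by itself force decay. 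You flag this as the delicate ingredient and it is classical for reversible periodic walks, so I regard it as a presentational rather than a mathematical gap; the network-theoretic citation in the paper sidesteps the issue entirely, while your route is more self-contained and more quantitative.
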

Non directed graph is a network, and such property can be seen in \cite{BLPS}.

\begin{example} The drifted grid graph.
\end{example}
Here we look at an example: the drifted square grid graph, which is a square lattice with drifted weight setting: every vertex have four incident edges with the conductances being $a$, $b$, $c$ and $d$ clockwise, see Figure 12. Its dual $G^*$ is the square lattice with edges weighted $1$. The fundamental domain of $G^d$ is the same as the example in Figure 4.

\begin{figure}[H]
\centering
\includegraphics[width=0.25\textwidth]{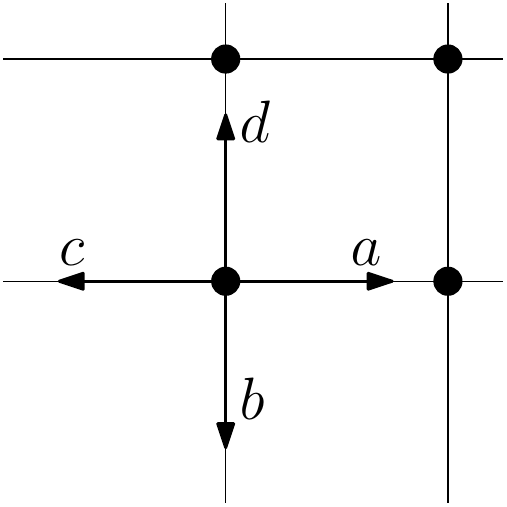}
\caption{Drifted square grid.}
\end{figure}

The phase diagram of the dimer model on $G^d$ with typical weighting is as in Figure 1 (black and grey curves give the amoeba). There is only one possible bounded gaseous region for any value of $(a,b,c,d)$ such that $a\neq c$ or $b\neq d$, otherwise such region vanishes. In Figure 1 the gaseous region is in light blue.

If the random walk associated is recurrent, then $a=c$ and $b=d$. This is a network, so Proposition 5.5 applies. Otherwise this is a transient graph and Proposition 5.4 applies. So the drifted square grid graph always satisfies the condition ($\star$).

\bigskip
\noindent
\textit{Proof of Theorem 5.3. }
When the condition ($\star$) is verified, any entry of $B_N$ converges when $N\rightarrow\infty$. Denote its limit by $B$. Measures $(m_N)_N$ converge to a limiting measure $m$. The entry of $B$ decays to $0$ as the distance between two vertices tends to infinity. Let $A=BD^{-1}$. It is the kernel of $m$ and satisfies the equation $A\Delta=d$. Each row vector of the matrix $A$ can be viewed as a function on the vertices of $G$.

Theorem 3.2 says that the kernel of $\mu$ is the unique matrix verifying this equation and decays at infinity. This proves that $m=\mu$. To finish the proof of Theorem 5.3, we just need to prove that the measure $m$ is supported by spanning trees, and this is proven in Lemma 5.7.

\begin{lemma}
The measure $m$ is supported on spanning trees of $\mathbb{Z}^2$.
\end{lemma}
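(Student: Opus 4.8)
Proof proposal for Lemma 5.7 (measure $m$ is supported on spanning trees of $\mathbb{Z}^2$)

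The plan is to transfer the combinatorial structure of the finite wired spanning trees $m_N$ through the weak limit. A configuration of $m$ is an infinite subgraph of the directed graph $G$ arising as a limit of finite wired spanning trees of $G_N$. Since each $T$ under $m_N$ has the property that every vertex except the root $r$ has exactly one outgoing edge, and under the weak limit the root is pushed off to infinity, every vertex of $G$ has exactly one outgoing edge $\mu$-almost surely (this is a local event, preserved under weak convergence). Thus $m$-almost every configuration is an \emph{essential spanning forest}: each connected component is an infinite subtree, possibly containing one bi-infinite path or a single ``cycle at infinity'' playing the role of a root, but in the finite approximation there are no cycles, so — again by locality of the event ``no contractible cycle of length $\le \ell$'' for each fixed $\ell$ — $m$ is supported on forests (unions of infinite trees). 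What remains is to rule out the existence of more than one component, i.e.\ to prove connectivity.

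To prove that there is $m$-a.s.\ a single tree, I would argue as follows. First, by Theorem 5.3, $m=\mu$, so it suffices to establish connectivity for the limit of the wired measures $m_N$; for these we have Wilson's algorithm and a random-walk representation, which is the reason for working with $B=(0,0)$. Fix two vertices $u,v\in V$; I want to show $\mathbb{P}_{m_N}[u \leftrightarrow v]\to 1$. Run Wilson's algorithm rooted at $r$ (the wired boundary vertex): the branch from $u$ is the loop-erasure of a random walk $RW_u^{G_N}$ started at $u$ and stopped when it hits the root $r$, and similarly for $v$; the two vertices fail to be connected in $T$ only if the walk from one of them reaches $r$ (equivalently, the wired boundary) before ever meeting the branch already grown from the other. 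Hence
$$
\mathbb{P}_{m_N}[u\not\leftrightarrow v]\ \le\ \mathbb{P}\big[RW_v^{G_N}\text{ hits }\partial G_N\text{ before hitting }\gamma_u\big],
$$
where $\gamma_u$ is the loop-erased trajectory from $u$. Because $G$ is $\mathbb{Z}^2$-periodic with at most a polynomial (in fact diffusive-plus-drift) escape rate, the walk from $v$ spends a long time in any bounded region before exiting $G_N$, and the trajectory $\gamma_u$ from $u$ is an infinite path (it never returns to a bounded set only finitely often in the $N\to\infty$ limit under condition $(\star)$); a standard second-moment / Green's-function estimate then shows the probability on the right tends to $0$ as first $N\to\infty$ and then the relevant scale $\to\infty$. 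Summing over all pairs $u,v$ in a finite set and using ergodicity of $\mu$ (the event ``at least two infinite components'' is translation-invariant), the probability of more than one component is $0$.

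The main obstacle is the connectivity step: controlling the Wilson-algorithm random walk uniformly in $N$. In the recurrent case ($a=c$, $b=d$) this is the classical argument of \cite{Pem},\cite{BLPS} for $\mathbb{Z}^2$ and goes through verbatim since the walk is recurrent and hits any fixed branch almost surely. In the genuinely drifted (transient) case one must instead use that, although the walk from $v$ may escape to infinity along the drift direction without meeting a \emph{fixed} compact set, the branch $\gamma_u$ from $u$ is itself an infinite path drifting in the \emph{same} direction (all branches have the same asymptotic drift, being loop-erasures of walks with the same increments), so the two paths are forced to meet with probability $\to 1$; making this precise requires a transversal-fluctuation estimate showing that two independent drifting loop-erased walks started at nearby points coalesce, which is where the real work lies. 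Once coalescence is established, the uniformity in $N$ and the passage to the limit are routine, and combined with the forest property established above this yields that $m$ is supported on a single spanning tree of $\mathbb{Z}^2$. \qed
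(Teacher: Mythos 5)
Your overall strategy --- forest structure by locality of the events ``one outgoing edge per vertex'' and ``no cycle of length $\le\ell$,'' then connectivity via Wilson's algorithm and the intersection of the walk from $v$ with the branch grown from $u$ --- is the right one and matches the paper's in spirit; your handling of the forest part and of the recurrent case is fine. But the step you yourself flag as ``where the real work lies'' is a genuine gap, and it is precisely the content of the lemma. You propose to close it with a transversal-fluctuation estimate showing that two independent drifting \emph{loop-erased} walks coalesce; that is a substantially harder statement than what is needed, and you give no argument for it.

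The missing idea is a reduction that removes the loop erasure from the problem. The paper combines Pemantle's criterion (the wired spanning-tree limit is a single tree if and only if an independent random walk and a loop-erased random walk intersect infinitely often a.s.) with Theorem 1.1 of Lyons--Peres--Schramm: for two independent transient Markov chains with the same transition kernel, if their \emph{paths} intersect infinitely often a.s., then the path of one and the loop erasure of the other also intersect infinitely often a.s. This reduces everything to showing that two independent copies of the un-erased drifted walk intersect infinitely often, which follows from the drift-of-order-$n$ plus fluctuation-of-order-$\sqrt{n}$ behaviour: in the frame moving with the common drift the relative transversal displacement is essentially a one-dimensional recurrent object, so the trajectories meet infinitely often. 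Without this reduction your argument stops exactly where the paper's begins, so as written the proposal does not prove the lemma. A smaller inaccuracy: under $m_N$ every pair $u,v$ is connected through the wired root, so $\mathbb{P}_{m_N}[u\not\leftrightarrow v]$ is zero as stated; the event to bound is ``$u$ and $v$ are connected only via the boundary,'' which is what Pemantle's criterion encodes and what your inequality is morally expressing.
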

\begin{proof} In \cite{Pem}, the author shows that spanning trees of equal weighted square grid converge to trees of $\mathbb{Z}^2$ if and only if independent simple random walk and loop erased random walk intersect infinitely often a.s. The same argument still applies to other cases. This is also known to the authors of \cite{LPS} in their Proposition 2.1. Here in our case where the weight function is defined on directed edges, there is nothing new.

Theorem 1.1 in \cite{LPS} shows that, for two independent transient Markov chains $RW^1$ and $RW^2$ on the same graph and having the same transition probabilities, if the path of $RW^1$ and that of $RW^2$ intersect infinitely a.s., then $LERW^1$ and $RW^2$ intersect infinitely a.s. too.

In our case, in scaling the random walk $\gamma$ behaves like a drift term of order $n$ plus a term of variance $\sqrt{n}$. The paths of two independent random walk meets a.s. as the time tends to infinity. This finishes the proof.
\qed
\end{proof}

\noindent
\textbf{Remark:} Here we choose wired spanning tree measure $m$ to approach $\mu$. However, we conjecture that the local behavior of the spanning tree finally does not depend on the choice of root $r$ on the boundary of $G_N$, \textit{i.e.} we choose the root vertex simply to be a vertex on the boundary of $G_N$ instead of gluing the boundary, and when $N\rightarrow\infty$ this always converges to the same measure no matter where the root is.

Our result is true for any graph with the property ($\star$), among which the drifted square grid graph is an interesting example. Proposition 5.4 works for all transient graphs. So the main difficulty for getting such results as Theorem 5.3 on general $\mathbb{Z}^2$-periodic graphs is that we don't know how to prove that the difference of the Green's function for recurrent random walk on directed graphs killed at wired boundary converges when the size of graph tends to infinity and decays when the distance of the vertices tends to infinity. We conjecture that this is true, and we remark that without this boundary condition, the decay of the difference of the Green's function can be found in some references, for example, \cite{KTU}.

\section{Phase diagram}

Combining the results in Section 4 and Section 5, we give a phase diagram for graphs verifying the condition ($\star$). Figure 1 gives the phase diagram of a drifted square grid graph of a typical weighting (Figure 12). If for this $G$ we take $\mathcal{G}_2$ as its fundamental domain and for each of its four vertices we independently assign an arbitrary weighting, the phase diagram is as Figure 13.

The bounded closed set corresponding to a $0$ slope (the region in light blue) corresponds to the phase where there is a.s. exactly one connected component (a spanning tree). Outside this set there are a.s. infinitely many connected components (a spanning forest). This bounded set corresponds to a gaseous phase in the dimer model (as in Figure 1 and Figure 13) or reduces to a single point in the liquid phase.

\begin{figure}[H]
\centering
\includegraphics[width=0.5\textwidth]{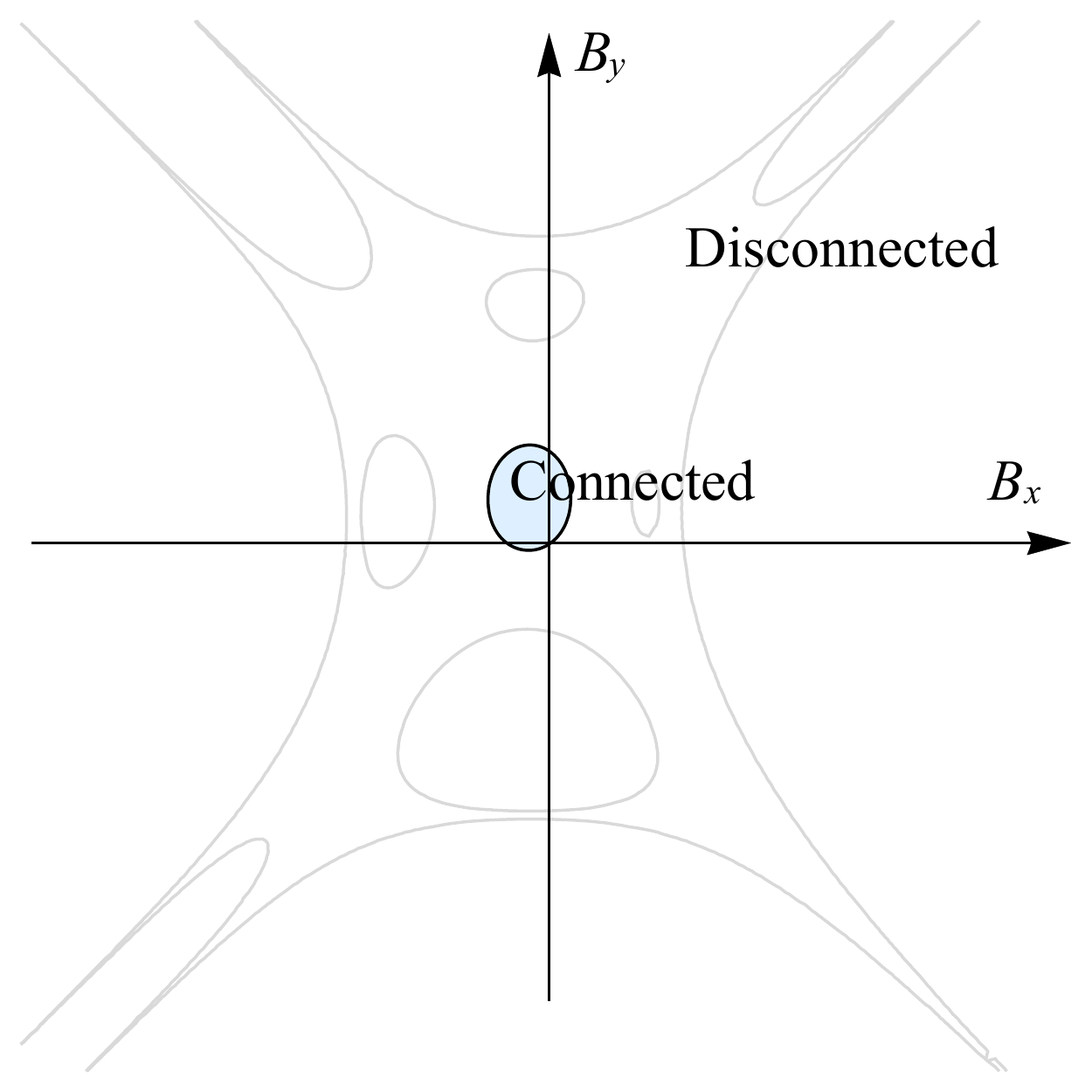}
\caption{Phase diagram of a square grid graph with fundamental domain $2\times2$.}
\end{figure}

There are also some interesting properties other than connectivity. Some are just repetitions of the results on the dimer model (\cite{KOS06}). In the liquid phase, the oriented edge-edge correlations decay polynomially and the variances of the height functions grow in the logarithm order. In the gaseous phases, the oriented edge-edge correlations decay exponentially, and the variances of the height functions are bounded. In the frozen phase, some of the height differences are deterministic.

\section{Remarks and open questions}
When we talk about the height, we mean the height function $h^M$ rather than $\tilde{h}^{(M,M_0)}$. The zero height change has a specific role in our problem.

The result that a measure of non-zero slope almost surely gives infinite connected components is true for any $\mathbb{Z}^2$-periodic graph. We conjecture that for general $\mathbb{Z}^2$-periodic graphs it is still true that there is only one connected component when slope is zero. Note that $0$ slope is an integer point in Newton polygon, if the weights are arbitrarily chosen, this is likely to correspond to a gaseous phase, and the origin $B=(0,0)$ lies on its boundary, see Lemma 5.1.

Measure corresponding to slope $0$ gives spanning trees whose branches are described by $LERW$. When slope is not $0$, there are bi-infinite bands. Inside such bands there are free spanning forests rooted at boundaries of bands, which are bi-infinite paths. It is interesting to see what such paths are.

When the slope is zero and the condition ($\star$) is verified, the toroidal dimer measure on $\mathcal{G}_N$ and the wired-spanning-tree measure on $G_N$ converge to the same limiting measure. By this fact we may conclude that their asymptotic entropies are the same. In fact, \cite{CKP01} states that the asymptotic entropy of a region depends loosely on the boundary height function. For spanning-tree measures on $G_N$, the boundary height function is given by the winding of a $LERW$ killed at boundary, and by geometric intuition this is about zero when renormalized by $N$. For the toroidal dimer measure in our case, the slope is zero.

\bibliographystyle{alpha}
\bibliography{mybib}
\end{document}